\title[(except possibly for finitely many points)]{Everything is
  illuminated }
\author{Samuel Leli\`evre}
\address{Laboratoire de math\'ematique d'Orsay, Umr 8628 Cnrs/Universit\'e 
Paris-Sud, B\^at.~425, campus Orsay, 91405 Orsay cedex, France,
{\tt samuel.lelievre@math.u-psud.fr}}
\author{Thierry Monteil}
\address{
LIRMM - CNRS (UMR 5506),  
161 rue Ada 
34392 Montpellier cedex 5
France {\tt thierry.monteil@lirmm.fr} 
}
\author{Barak Weiss}
\address{School of Math. Sciences, Tel Aviv University, Israel,
{\tt barakw@post.tau.ac.il}}
\newif\ifdraft\drafttrue
\font\sn = cmssi8 scaled \magstep0
\newcommand\name[1]{\label{#1}{\ifdraft{\sn [#1]}\else\ignorespaces\fi}}
\newcommand\eq[2]{{\ifdraft{\ \tt [#1]}\else\ignorespaces\fi}\begin{equation}\label{eq:
#1}{#2}\end{equation}}
\newcommand {\equ}[1]     {\eqref{eq: #1}}
\renewcommand{\phi}{\varphi}
\newcommand{\bc}{\operatorname{bc}}
\newcommand{\Q}{{\mathbb {Q}}}
\newcommand{\BCP}{{\mathrm {BCP}}}
\newcommand{\BB}{\operatorname{BB}}
\newcommand{\R}{{\mathbb{R}}}
\newcommand{\Z}{{\mathbb{Z}}}
\newcommand{\C}{{\mathbb{C}}}
\newcommand{\HH}{{\mathcal{H}}}
\newcommand{\N}{{\mathbb{N}}}
\newcommand{\SL}{\operatorname{SL}}
\newcommand{\cL}{{\mathcal L}}
\newcommand{\df}{{\, \stackrel{\mathrm{def}}{=}\, }}
\newcommand{\FF}{{\mathcal{F}}}
\newcommand{\x}{{\bf x}}
\newcommand{\til}{\widetilde}
\newcommand{\supp}{{\rm supp}}
\newcommand{\sm}{\smallsetminus}
\newcommand\Aff{\mathrm{Aff}}
\newcommand{\cF}{{\mathcal F}}
\newcommand{\cH}{{\mathcal H}}
\newcommand{\REL}{\operatorname{REL}}
\newcommand{\GREL}{G\oplus\REL}
\newcommand {\ignore}[1]  {}
\newtheorem{thm}{Theorem}
\newtheorem{lem}[thm]{Lemma}
\newtheorem{prop}[thm]{Proposition}
\newtheorem{cor}[thm]{Corollary}
\newtheorem{remark}[thm]{Remark}
\begin{document}

\begin{abstract}
We study geometrical properties of translation surfaces: the
finite blocking property, bounded blocking property, and illumination
properties. These are elementary
properties which can be fruitfully 
studied using the dynamical behavior of the $\SL(2,\R)$-action on the
moduli space of translation surfaces. We characterize surfaces with
the finite blocking property and bounded blocking property, completing
work of the second-named author \cite{ThierryBP}. Concerning the
illumination problem, we also
extend results of Hubert-Schmoll-Troubetzkoy
\cite{HST}, removing the hypothesis that the surface in question is a
lattice surface, thus settling a conjecture of \cite{HST}. Our results crucially rely on 
the recent breakthrough results of Eskin-Mirzakhani \cite{EsMi} and 
Eskin-Mirzakhani-Mohammadi \cite{EsMiMo}, and on related results of Wright \cite{Wright}.
\end{abstract}

\maketitle
\section{Introduction}
A {\em translation surface} is a finite union of polygons, glued along
parallel edges by translations, up to a cut and paste
equivalence. These structures arise in the study of billiards,
interval exchange transformations, and various problems in group
theory and geometry. See \cite{MT, Zorich} for comprehensive
introductions and detailed definitions. 
The purpose of this paper is to apply recent breakthrough results of
Eskin-Mirzakhani \cite{EsMi} and 
Eskin-Mirzakhani-Mohammadi \cite{EsMiMo}, on the dynamics of a group
action on the moduli space of translation surfaces, to some 
elementary geometrical questions concerning translation surfaces. We begin with
some definitions. 

A pair of points $(x,y) \in M \times M$ is {\em finitely blocked} if
there exists a finite set $B \subset M$ which does not contain $x$ or
$y$ and intersects every straight-line trajectory connecting $x$ and
$y$. A set $B$ with this property is called a {\em blocking set} for
$(x, y)$, and the minimal
cardinality of a blocking set is called the {\em blocking
  cardinality of $(x,y)$} and is denoted by $\bc(x,y)$. 
A translation surface $M$ has the \emph{blocking property} if any
pair $(x,y) \in M \times M$ is finitely blocked, and the \emph{bounded
  blocking property} if 
there is a number $n$ such that any pair $(x,y) \in M \times M$ is
finitely blocked with blocking cardinality at most $n$. 
If $x$ and $y$ are finitely blocked with blocking cardinality zero,
that is, if there is no straightline path on $M$ from $x$ to $y$, then
we say that $x$ and $y$ {\em do not illuminate each other}. 
A translation surface $M$ is a \emph{torus cover} if there is a
surjective translation map from $M$ to a torus (the singularities of
$M$ may project to one or several points on the torus). 
Equivalently
(see e.g.\ \cite{ThierryBP}), the 
subgroup of $\R^2$ generated by holonomies of absolute periods on $M$
is discrete.  

Our first result settles a question of the second-named author, see
  \cite{ThierryBP, ThierryPP}. 

\begin{thm}\name{thm: BP}
For a translation surface $M$, the following are equivalent:
\begin{enumerate}
\item \name{enum:torus-cover}
$M$ is a torus cover.
\item \name{enum:blocking}
$M$ has the blocking property.
\item \name{enum:open-blocking}
There is an open set $U \subset M \times M$ such that any pair of
points in $U$ is finitely blocked.
\item \name{enum:bounded-blocking}
$M$ has the bounded blocking property.
\end{enumerate}
\end{thm}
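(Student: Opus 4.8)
My strategy is to prove the cycle of implications $(\ref{enum:torus-cover}) \Rightarrow (\ref{enum:blocking}) \Rightarrow (\ref{enum:bounded-blocking}) \Rightarrow (\ref{enum:open-blocking}) \Rightarrow (\ref{enum:torus-cover})$, where the first two implications are the ``easy'' direction already essentially contained in \cite{ThierryBP}, and the last implication $(\ref{enum:open-blocking}) \Rightarrow (\ref{enum:torus-cover})$ is the new content requiring the Eskin--Mirzakhani machinery.

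For $(\ref{enum:torus-cover}) \Rightarrow (\ref{enum:bounded-blocking})$ (which subsumes $(\ref{enum:torus-cover}) \Rightarrow (\ref{enum:blocking})$): if $p \colon M \to \Tor$ is a translation covering of a torus of degree $d$, then any straight-line trajectory on $M$ from $x$ to $y$ projects to a straight-line trajectory on $\Tor$ from $p(x)$ to $p(y)$. On the torus, the midpoint of a segment from $a$ to $b$ lies in the finite set $\{\frac{1}{2}(a+b) + \frac{1}{2}\Lambda\}/\Lambda$ of $4$ points (where $\Lambda$ is the period lattice), since a geodesic from $a$ to $b$ lifts to a line segment in $\R^2$ from a lift of $a$ to a lift of $b$, and its midpoint reduces mod $\Lambda$ to one of these four points. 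Pulling back this $4$-point blocking set through $p$ gives a blocking set on $M$ of size at most $4d$ for \emph{every} pair $(x,y)$; one must take a tiny bit of care when $x$ or $y$ is itself a preimage of one of these four points, which is handled by perturbing the choice of lift or by a standard general-position argument. This gives the bounded blocking property with $n = 4d$. The implications $(\ref{enum:bounded-blocking}) \Rightarrow (\ref{enum:blocking}) \Rightarrow (\ref{enum:open-blocking})$ are trivial, so it remains only to close the loop.

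For $(\ref{enum:open-blocking}) \Rightarrow (\ref{enum:torus-cover})$: suppose $M$ is \emph{not} a torus cover, so the group $\Lambda$ generated by holonomies of absolute periods is dense in $\R^2$. I want to show that for a \emph{generic} pair $(x,y)$ — in particular for a dense $G_\delta$, contradicting the existence of the open set $U$ — the pair is not finitely blocked. The idea, following \cite{HST}, is that the set of holonomy vectors of saddle connections and more generally of ``generalized diagonals'' from $x$ to $y$ should be infinite and ``spread out'' enough that no finite set can block all the corresponding trajectories. Concretely, one uses the $\SL(2,\R)$-action: the $\SL_2(\R)$-orbit closure $\cL$ of $M$ is, by \cite{EsMiMo}, an affine invariant submanifold, and since $M$ is not a torus cover, results of Wright \cite{Wright} on the field of definition and the cylinder deformations show that $\cL$ is not ``too small'' — in particular the directions in which $M$ decomposes into cylinders, or the directions of saddle connections, are dense and the associated trajectories from a generic $x$ to a generic $y$ accumulate on all directions. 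Combined with a continuity/equidistribution argument (using Eskin--Mirzakhani \cite{EsMi} to get that the relevant measures on $\cL$ are the natural ones, and then a Fubini argument over choices of $(x,y)$), one shows that for $(x,y)$ outside a measure-zero set, the trajectories from $x$ to $y$ pass through every neighborhood of every point infinitely often in such a way that no finite set meets all of them. This contradicts the hypothesis that all pairs in the open set $U$ are finitely blocked.

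**Main obstacle.** The hard part is precisely the last implication, and specifically extracting, from the orbit-closure classification, a concrete statement about trajectories between two individual points $x$ and $y$ rather than about the surface $M$ as a point in moduli space. The dynamical input (Eskin--Mirzakhani--Mohammadi) controls the $\SL_2(\R)$-orbit of $M$, but ``$(x,y)$ is finitely blocked'' is a statement about the geometry of $M$ with two marked points, i.e.\ really about a point in a \emph{marked} stratum or in a bundle over the stratum. One must therefore either work in the moduli space of surfaces with marked points (and apply the classification there, tracking how the marked points sit relative to the affine invariant submanifold — this is where Wright's results on ``marked points'' and the structure of $\cL$ are essential), or find an argument that reduces blocking of the pair $(x,y)$ to a property of the unmarked surface that can be detected dynamically. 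Handling the measure-zero exceptional set of pairs (``except possibly for finitely many points'', as the running title winks at) and upgrading ``generic $(x,y)$ not blocked'' to ``no open set of blocked pairs'' is the delicate endgame.
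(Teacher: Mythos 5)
Your chain $(\ref{enum:torus-cover}) \Rightarrow (\ref{enum:blocking}) \Rightarrow (\ref{enum:bounded-blocking})$ via pulling back the four midpoints on the torus is fine and matches the easy direction. The problem is the closing implication, where your proposal is not a proof but a description of a hoped-for proof, and moreover the guiding heuristic is wrong. You argue that if $M$ is not a torus cover then connecting trajectories from a generic $x$ to a generic $y$ are so ``spread out'' in direction that no finite set can block them. But abundance and density of directions of connecting trajectories does not obstruct finite blocking: on the torus itself there are trajectories between any two points in a dense set of directions, yet every pair is blocked by four points. Finite blocking is governed by where the \emph{division points} of the trajectories land, not by how many trajectories there are, so no equidistribution-of-directions argument of the kind you sketch can work directly.

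The actual argument runs through two steps you do not have. First, $(\ref{enum:open-blocking}) \Rightarrow (\ref{enum:bounded-blocking})$: one shows that the sets $F_n(M)$ of pairs with $\bc \le n$ are closed in $\widehat{M^2}$ (this requires a careful limiting argument for blocking sets, with modifications near $x$, $y$, and the singularities), applies Baire category to find an $n$ with $F_n(M) \supset U_2$ open, and then works in the doubly-marked stratum $\cH''$: the orbit closures $\overline{G(M,x,y)}$ for $(x,y) \in U_2$ are countably many affine invariant manifolds (Eskin--Mirzakhani--Mohammadi) whose intersections with the fiber over $M$ are linear submanifolds of $\widehat{M^2}$ of dimension $0$, $2$ or $4$; Baire category again forces one of dimension $4$, which by connectedness is the whole fiber, so \emph{all} pairs of distinct nonsingular points satisfy $\bc \le n$. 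Second, $(\ref{enum:bounded-blocking}) \Rightarrow (\ref{enum:torus-cover})$: since $\BB_n$-type conditions are closed and $G$-invariant, every surface in $\cL = \overline{GM}$ has the blocking property, hence by Monteil's theorem is \emph{purely} periodic (completely periodic with commensurable circumferences); Wright's theorems then give that $\cL$ has cylinder rank one and field of definition $\Q$, so $\cL$ contains a square-tiled surface, and moving within the $\fg \oplus \REL$ leaf preserves discreteness of absolute holonomy, making every surface in $\cL$ a torus cover. Your proposal invokes Wright's results in the opposite logical direction (``$M$ not a torus cover $\Rightarrow$ $\cL$ not too small'') and never uses the pure periodicity input, which is the bridge between the blocking property and the algebraic constraints on $\cL$; without it the implication does not close.
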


Hubert, Schmoll and Troubetzkoy \cite{HST} have constructed an example of a
translation surface $M$ which is not a torus cover, and in which
there are infinitely many pairs of points which do not illuminate each
other. In fact, there is an involution $\tau: M \to M$ such that for
any $x \in M$, there is no straight line between $x$ and
$\tau(x)$. See \S \ref{related example2} for
similar examples. This
shows that in (\ref{enum:open-blocking}), it is not 
enough to suppose that $U$ is infinite. 

Our second result concerns questions of illumination. The classical illumination
problem was first posed in the 1950's, when it was asked whether there
exists a polygonal room with a pair of points which do not
illuminate each other. First examples were found by Tokarsky
\cite{Tokarsky} and Boshernitzan (unpublished), and this raised the
question of classification and possible cardinality of pairs of points
which do not illuminate one another on translation surfaces. We refer to \cite{HST} or the
Wikipedia page {\tt
  http://en.wikipedia.org/wiki/Illumination\_problem} for a brief
history. We show:
\begin{thm}\name{thm: illumination}
For any translation surface $M$, and any point $x \in M$, the set of
points $y$ which are not illuminated by $x$ is finite.

Moreover, the set
$$
\{(x,y): x \text{ and } y \text{ do not illuminate each other} \}
$$
is the union of a finite set, and of finitely many translations
surfaces $M'$ embedded in $M \times M$, such that the projections
$p_i|M': M' \to M$ are both finite-degree covers.  
\end{thm}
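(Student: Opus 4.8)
The plan is to recast non-illumination as membership in a closed $\SL(2,\R)$-invariant locus inside a bundle of pairs of points over the orbit closure of $M$, and then to control that locus using \cite{EsMi}, \cite{EsMiMo} and \cite{Wright}. Let $\HH$ be the stratum containing $M$, let $\cL\df\overline{\SL(2,\R)\cdot M}\subset\HH$ (an affine invariant submanifold by \cite{EsMiMo}), and let $\til\HH$ be the stratum obtained from $\HH$ by marking two further points, allowed to be regular (``zeros of order $0$'') and allowed to collide with each other or with the true zeros in the natural partial completion. Over $\cL$ sits the bundle $\til\cL\to\cL$ with fibre $M'\times M'$ over $M'\in\cL$; in period coordinates $\til\cL$ is cut out by the same linear equations as $\cL$, the two new relative periods being unconstrained, so $\til\cL$ is again an affine invariant submanifold and the results of \cite{EsMiMo} apply in $\til\HH$, the passage to marked points and the added ``rel'' directions being exactly what \cite{Wright} handles. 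Finally, the first assertion of the theorem is a consequence of the ``moreover'' part: intersecting the non-illumination locus with $\{x\}\times M$ and using that each $p_i|_{M'}$ has finite degree leaves finitely many $y$; so it suffices to describe $\mathcal B\df\{(x,y): x\text{ and }y\text{ do not illuminate each other}\}\subset M\times M$.

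Next I would observe that illumination is an open, $\GL(2,\R)^+$-invariant condition on $\til\HH$: a straight segment from $P$ to $Q$ avoiding the zeros has a combinatorial crossing pattern that is stable under small perturbations of $(M',P,Q)$, so it persists, while linear maps of the plane carry admissible segments to admissible segments. Hence the non-illumination locus $\til{\mathcal B}\subset\til\cL$ is closed and $\SL(2,\R)$-invariant; moreover if some point of $\overline{\SL(2,\R)\cdot(M',P,Q)}$ has its two marked points joined by an admissible segment then so does $(M',P,Q)$ (the segment persists on a neighbourhood, which meets the orbit, and one pulls back by the group element), so whenever $(x,y)\in\mathcal B$ the entire orbit closure $\overline{\SL(2,\R)\cdot(M,x,y)}$ lies in $\til{\mathcal B}$. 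Now I would apply the structure theory: by \cite{EsMiMo}, together with \cite{Wright} for the marked-point bundle, the closed $\SL(2,\R)$-invariant set $\til{\mathcal B}$ is a finite union of affine invariant submanifolds $\mathcal N_1,\dots,\mathcal N_r$ of $\til\HH$.

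To conclude, fix $M$. Every translation surface has an illuminating pair (flow horizontally a short way from a non-singular point), so $\mathcal B\subsetneq M\times M$; and $\mathcal B$ equals the union of the finitely many sets $\mathcal N_i\cap(\{M\}\times M\times M)$ together with the finitely many configurations in which a marked point meets a zero or the two marked points coincide (finite since $M$ has finitely many zeros). Each $\mathcal N_i\cap(\{M\}\times M\times M)$ is closed in $M\times M$ and locally cut out by real-linear equations in period coordinates, hence real-analytic with finitely many connected components; since $\mathcal B\ne M\times M$, no component is all of $M\times M$, so every component is either a single point or carries, via the pullbacks of the coordinate one-forms, a translation structure making it a translation surface $M'$ embedded in $M\times M$ with $p_1|_{M'}$ and $p_2|_{M'}$ translation maps of compact surfaces, hence finite-degree covers. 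Assembling the finitely many point-components and boundary configurations into the finite set, and the finitely many surface-components into the $M'$, yields the theorem.

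I expect the main difficulty to be the last step: showing that the positive-dimensional components of $\til{\mathcal B}$ are genuinely translation surfaces covering $M$ in each coordinate, i.e.\ that they are pulled back from $\cL$ by covering constructions and restrict on each fibre to graphs of covering correspondences rather than to more exotic subsets. This is precisely where the fine structure of the ``rel'' foliation and of cylinder deformations from \cite{Wright}, layered on top of \cite{EsMi} and \cite{EsMiMo}, is indispensable; a secondary technical point is the transfer of the Eskin--Mirzakhani--Mohammadi theorems to strata with marked points and the bookkeeping of the boundary loci where marked points collide.
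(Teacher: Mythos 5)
Your overall strategy -- pass to the stratum with two extra marked points, observe that non-illumination is a closed $G$-invariant condition, invoke the Eskin--Mirzakhani--Mohammadi structure theory, and then slice over the fixed surface $M$ -- is the same as the paper's (which proves a more general statement for blocking cardinality $\leq n$ and specializes to $n=0$). Your observation that for illumination the closedness is immediate (a connecting segment persists under perturbation) is a legitimate shortcut past the paper's more delicate closedness lemma. However, two steps that carry the real content are missing. First, the finiteness. You assert that the closed invariant locus is a finite union of affine invariant submanifolds ``by \cite{EsMiMo} together with \cite{Wright}'' and that each slice has finitely many components because it is real-analytic. Neither claim is enough as stated: the fiber $(M\sm\Sigma)^2\sm\Delta$ is non-compact, so a $0$-dimensional closed linear subset of it (or a real-analytic subset) can be infinite, and what must be ruled out is infinitely many distinct orbit closures meeting the fiber over $M$. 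The paper does this by combining the accumulation statement of \cite{EsMiMo} with the fact that each such orbit closure projects \emph{openly} onto $\overline{GM}$ (Lemma \ref{lem: projection closed}, which is itself nontrivial because the fiber is non-compact -- it is proved by an equidistribution/pushforward-of-measures argument). Hence any accumulation onto a higher-dimensional orbit closure must increase dimension in the fiber direction, forcing the whole fiber into the locus and contradicting the existence of an illuminating pair. Your proposal contains no substitute for this lemma.

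Second, the structure of the $2$-dimensional components. You explicitly defer the proof that they are translation surfaces finitely covering $M$ under both projections, and you point to the REL foliation and Wright's cylinder deformation theory as the needed input. That is a misdiagnosis: Wright's results enter only in the proof of Theorem \ref{thm: BP} (via complete periodicity and cylinder rank one), not here. The actual argument is elementary once one knows from \cite{EsMi, EsMiMo} that affine invariant manifolds are \emph{complex}-linear and \emph{defined over $\R$}: in the product translation charts a $2$-dimensional component is locally $\{az_1+bz_2=\mathrm{const}\}$ with $a,b\in\R$; the degenerate cases $a=0$ or $b=0$ (components of the form $M\times F$ or $F\times M$) are excluded because every point illuminates, and is illuminated by, nearby points; and when $a,b\neq 0$ both projections have nonsingular derivative, hence by properness and connectedness are finite translation covers up to dilation, with the area comparison giving $(a/b)^2=k_2/k_1\in\Q$. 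Without this linearity-over-$\R$ input and the explicit exclusion of the degenerate components, your final step does not close.
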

Here $p_i: M \times M
\to M, \ i=1,2$ are the natural projections onto 
the first and second factors respectively. 

Theorem \ref{thm: illumination} strengthens 
results of \cite{HST}, which deal with surfaces which have a
large group of translation automorphisms. Namely, Theorem \ref{thm:
  illumination} was proved in
\cite{HST} under the additional hypothesis that $M$ is a lattice
surface, and when $M$ is a pre-lattice surface, the first assertion of
the theorem was shown, with `countable' in place of `finite' (for the
definitions see \S \ref{subsec: Veech groups}).  
The first assertion of Theorem \ref{thm: illumination} settles
\cite[Conjecture 1]{HST}. In \S \ref{section: illumination} we deduce
Theorem \ref{thm: illumination} from the more general Theorem
\ref{thm: more general}. In \S \ref{section: examples} we give
examples which elaborate on related examples given in \cite{HST}. 

A standard `unfolding' technique (see \cite{MT, Zorich}) leads to the
following result, which justifies the title of this paper. 
\begin{cor}\name{cor: billiard illumination}
Let $P$ be a rational polygon. Then for any $x \in P$ there are at
most finitely many points $y$ for which there is no geodesic
trajectory between $x$ and $y$. 
\end{cor}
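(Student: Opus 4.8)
The plan is to apply the standard unfolding construction to reduce the billiard problem to the already-established translation surface statement (Theorem \ref{thm: illumination}). First I would recall that since $P$ is a rational polygon, the group $G$ generated by the reflections in the sides of $P$, acting on directions, is finite; unfolding $P$ across its sides according to $G$ produces a translation surface $M_P$ together with a finite-to-one map $\pi \colon M_P \to P$ (away from a finite set of points over the vertices of $P$ and over the singularities of $M_P$). The key property of this construction is that billiard trajectories in $P$ lift to straight-line geodesics in $M_P$: a trajectory from $x$ to $y$ in $P$ that avoids the vertices corresponds to a straight-line segment in $M_P$ between some point in $\pi^{-1}(x)$ and some point in $\pi^{-1}(y)$, and conversely. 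I would state this correspondence carefully, noting that one must exclude trajectories passing through vertices of $P$ (these form a measure-zero, but possibly infinite, family) — however, for the illumination question one only needs: if $x$ does not illuminate $y$ in $P$ via non-vertex trajectories, then no lift of $x$ illuminates any lift of $y$ in $M_P$.

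Next I would fix $x \in P$ and consider the finite set $\pi^{-1}(x) = \{x_1, \dots, x_k\} \subset M_P$. Suppose $y \in P$ is not illuminated from $x$ by any billiard trajectory avoiding the vertices. Then for each $i$ and each $j$, the point $x_i$ does not illuminate the point $y_j \in \pi^{-1}(y)$ in $M_P$ (a geodesic from $x_i$ to $y_j$ would project to a billiard path from $x$ to $y$). By the first assertion of Theorem \ref{thm: illumination}, for each fixed $i$ the set $N_i \df \{ z \in M_P : x_i \text{ does not illuminate } z \}$ is finite. Hence $\pi^{-1}(y) \subseteq \bigcup_{i=1}^k N_i$, which is a finite subset of $M_P$. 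Since $\pi$ is finite-to-one and surjective, $y$ lies in the finite set $\pi\!\left( \bigcup_{i=1}^k N_i \right)$. Finally I would handle the points $y$ reachable only through vertex-hitting trajectories, or not reachable at all: a point $y$ such that every billiard path from $x$ to $y$ (if any) passes through a vertex, together with $y$ that are genuinely not illuminated, are all contained in the finite set just described plus possibly the (finite) vertex set of $P$ and their continuation data — so the whole set of non-illuminated $y$ is finite.

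The main obstacle I anticipate is the bookkeeping around trajectories that pass through vertices of the polygon, where the billiard flow is not defined and the unfolding map $\pi$ is only a branched cover. One has to decide on a convention — typically, a trajectory hitting a vertex is declared not to illuminate — and then verify that with this convention the correspondence with geodesics on $M_P$ respects the "illuminates" relation in the direction needed. Concretely, the subtlety is that $\pi$ may fail to be a covering map over the images of the cone points of $M_P$ and over the vertices of $P$, so the clean statement "$x_i$ illuminates $z$ iff $x$ illuminates $\pi(z)$" can break at those finitely many points; I would address this by working away from that finite exceptional set and absorbing it into the final "finitely many" conclusion, exactly as the exceptional finite sets are absorbed in Theorem \ref{thm: illumination} itself. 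Everything else — finiteness of $G$, finite-to-one-ness of $\pi$, and the lifting of trajectories — is classical and can be cited from \cite{MT, Zorich}.
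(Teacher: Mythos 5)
Your proposal is correct and is precisely the ``standard unfolding technique'' that the paper invokes for this corollary without spelling it out: unfold the rational polygon to a translation surface $M_P$ with a finite-to-one projection $\pi$, apply the first assertion of Theorem \ref{thm: illumination} to each of the finitely many lifts of $x$, and push the resulting finite non-illuminated sets back down via $\pi$, with the vertex/singularity bookkeeping handled as you describe. No gaps.
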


There is a moduli space $\cH$ parameterizing all translation surfaces
sharing some topological data, and this space is equipped with an
action of the group $G \df \SL(2,\R)$. The 
breakthrough work \cite{EsMi, EsMiMo} has made it
possible to analyze the dynamics of this action in great detail. Our
analysis depends crucially on this work, as well as on additional work
of Wright \cite{Wright}. 

We note that the crucial feature which make our analysis possible is
that the geometric properties we consider give rise to subsets of
$\cH$ which are closed and $G$-invariant. It has long been known that
a detailed understanding of the $G$-action would shed light on the
illumination problem, as well as on many
similar `elementary' problems. For more papers applying the
dynamics of the $G$-action to the analysis of closed and $G$-invariant
geometrical properties of translation surfaces, see 
\cite{Veech_hayashibara, Vorobets, ThierryBP, ThierryPP, HST, nst, toronto, noconvex}.

\subsection{Acknowledgements} 
This works relies on deep results of Alex Eskin, Maryam Mirzakhani and
Amir Mohammadi, and is influenced by Alex Eskin's vision that the geometrical
problems considered here can be solved via ergodic theory. We are also
grateful to Erwan Lanneau, Duc-Manh Nguyen, John Smillie and Alex
Wright for 
helpful discussions. 
This research was supported by the ANR projet blanc GEODYM and
European Research Council grant DLGAPS 279893.

\ignore{

 Say that it
enables the classification of translation surfaces with fixed
geometric properties, as long as they correspond to {\em invariant
  closed sets} in strata of translation surfaces. 
Mention $G$-invariant properties that are known to us, and their
variants which give closed sets:
\begin{itemize}
\item
Admitting a branched cover of a certain topological type. 
\item
(Bounded) blocking property. 
\item
No (strictly) convex presentations. 
\item 
Fewer than normal disjoint cylinders. 
\item
Cylinders do not generate homology. 
\item
No small $n$-gons or virtual $n$-gons or convex $n$-gons. Similarly
with virtual $n$-gons in which the saddle connections join some
designated subset of the singularities.  
\item Bounded complete periodicity, bounded in the sense of Vorobets. 
\item No face to face presentations (a presentation of a translation
  surface is a {\em face to face polygon} if it is a polygon
  representing the surface such that
  each side of the polygon contains a point which sees its image on
  the other side). Similarly replacing `one point in each edge sees its image' with
  `one point in each edge sees another point in the same edge', or `the whole edge sees the whole
  edge'. 
\end{itemize}
}
\section{Preliminaries}\name{sec: preliminaries}
We begin by briefly recalling the definitions of translation surfaces
and strata, and refer to \cite{MT, Zorich} for more details. 
Fix a topological orientable surface $S$ of genus $g$, a finite subset $\Sigma =
\{x_1, \ldots, x_k\}$ of $S$, and non-negative integers $\alpha_1,
\ldots, \alpha_k$ so that $\sum_i \alpha_i = 2g-2.$ We allow some of the
$\alpha_i$ to be zero and require $k \neq 0$. A translation surface
$M$ of type $\vec{\alpha} = (\alpha_1, \ldots, \alpha_k)$ is a surface
$M$ homeomorphic to $S$, with $k$ labelled 
singular points $\{\xi_1, \ldots, \xi_k\}$, equipped with an
equivalence class of atlases of {\em planar charts}, i.e.\ maps 
from open subsets of $M \sm \{\xi_1, \ldots, \xi_k\}$ to $\C$, such that:
\begin{itemize}
\item
Transition maps for the charts are translations.
\item
At each $\xi_i$ the charts give rise to a cone type singularity of
angle $2\pi(\alpha_i+1)$. 
\end{itemize}
As usual two atlases are considered equivalent if their union is also an
atlas of the same type, and two translation surfaces are considered
equivalent if there is a homeomorphism from one to the others, which is a
translation in charts, and maps
the distinguished finite set $\{\xi_i\}$  of one translation surface
bijectively to the other in a way which 
respects the numbering. Note that an atlas of planar charts on $M \sm \Sigma$
naturally induces a translation structure on $(M \sm \Sigma) \times (M
\sm \Sigma)$, with charts
taking values in $\C^2$ and for which transition maps are
translations. We will call this the {\em Cartesian product translation
  structure on $M^2$}. 

The points $\xi_i$ are called {\em singularities}. Note that we have
allowed singularities with cone angle $2\pi$ (as happens when
$\alpha_i=0$). Such singularities are sometimes referred to as {\em
  marked points}. Note also that in contrast to the convention used by
some authors, our convention is that 
singularities are labeled. 

A homeomorphism $S \to M$ which
maps each $x_i$ to $\xi_i$ is called a {\em marking}. We can use a
marking and the planar charts of $M$ to evaluate the integrals
of directed paths on $S$ beginning and ending in $\Sigma$. Such an integral is
a complex number whose real and imaginary components measure respectively the total
horizontal and vertical distance travelled when moving in $M$ along
the image of the path. Denote by $\cH(\vec{\alpha})$ the set of translation surfaces of type
$\vec{\alpha}$. It is called a {\em stratum} and is equipped with a 
natural topology defined as follows.  The discussion above shows that the marking gives rise to a map 
$$
\cH(\vec{\alpha}) \to
H^1(S, \Sigma ; \C).
$$
It is known that the maps above constitute an atlas of charts which
endow $\cH(\vec{\alpha})$ with the structure of a linear orbifold. We
will call these coordinates {\em period coordinates}. With respect
to period coordinates, the change of a marking constitutes a change
of coordinates via a unimodular integral matrix, so $\cH(\vec{\alpha})$ is
naturally endowed with a Lebesgue measure and a $\Q$-structure. It is
known that each stratum has 
finitely many connected components. 

The group $G$ acts on each stratum component $\cH$ by postcomposition
of planar charts. That is, identifying the field of complex numbers
with the plane $\R^2$ in the usual way, each $g \in G$ is a linear map of $\R^2$
and we use it to replace each chart $M \supset
U \stackrel{\varphi}{ \to} \C \cong \R^2$ with the chart $g \circ \varphi: U
\to \R^2$. For each stratum component $\cH$, the subset $\cH^{(1)}$
consisting of area one surfaces is a 
sub-orbifold which in period coordinates is cut out by a quadratic
condition. It is preserved by the $G$-action, and $G$ acts ergodically
preserving a natural smooth finite measure obtained from the Lebesgue
measure by a cone construction. Given a translation surface $M$ and a
positive real number $t$, we denote by $tM$ the translation surface
obtained by multiplying all planar charts of $M$ by the scalar $t$. 

\subsection{Adding marked points}\name{subsec: adding points}
We will need some notation for the operation of covering a stratum by
a corresponding stratum with one or two additional marked points. 

Given a stratum component $\cH$, we denote by $\cH'$ the corresponding
stratum component of
surfaces with one additional marked point, and by  
$\cH''$ the corresponding stratum component of surfaces with two additional marked
points. More formally this is defined as follows. Suppose $\cH$ is a
component of $\cH(\vec{\alpha})$ where $\vec{\alpha} \df  (\alpha_1,
\ldots, \alpha_k)$ and $\Sigma \df \{x_1, \ldots, x_k\}$ is a finite
subset of cardinality $k$ in the topological surface $S$. Let
$x_{k+1}, x_{k+2}$ denote two distinct points on $S \sm \Sigma$, set
$\alpha_{k+1} = \alpha_{k+2}=0$, set
$$
\Sigma' \df \Sigma \cup 
\{x_{k+1}\}, \ \ \Sigma'' \df \Sigma' \cup \{x_{k+2}\}, \ \ \vec{\alpha}'
\df (\alpha_1, \ldots, \alpha_{k+1}), \ \ \vec{\alpha}''
\df (\alpha_1, \ldots, \alpha_{k+2}),
$$
and let $\varphi': \cH(\vec{\alpha}') \to \cH(\vec{\alpha}), \ 
\varphi'': \cH(\vec{\alpha}'') \to \cH(\vec{\alpha}')$ be the {\em
  forgetful maps} obtained by deleting the points corresponding to
$x_{k+1}, x_{k+2}$ from the domain of any planar chart. Let $\varphi
\df \varphi' \circ \varphi''$. The three maps $\varphi', \varphi'',
\varphi$ are
bundle maps for the respective bundles $\cH(\vec{\alpha}'),
\cH(\vec{\alpha}''), \cH(\vec{\alpha}'') $ with bases $\cH(\vec{\alpha}), \cH(\vec{\alpha}'), \cH(\vec{\alpha})$ and fibers $S \sm \Sigma, S \sm
\Sigma', (S \sm \Sigma)^2 \sm \Delta$ respectively ($\Delta$ is the diagonal). Finally we let $\cH', 
\cH''$ be the connected components of $\cH(\vec{\alpha}')$ and
$\cH(\vec{\alpha}'')$ covering the component $\cH$. 

One 
easily checks from the definitions that the maps $\varphi, \varphi',
\varphi''$ are $G$-equivariant, and that the fibers are linear
manifolds in period coordinates. Moreover note that the linear
structure on a fiber $\varphi'^{-1}(M) \cong S \sm \Sigma$ coincides
with the translation structure afforded by the translation charts on
$M$, and similarly, the linear structure on a fiber $\varphi^{-1}(M)
\cong (S \sm \Sigma)^2 \sm \Delta$ coincides with the Cartesian product
translation structure on $M^2$. In the sequel we will refer to
$x_{k+1}, x_{k+2}$ as the first and second marked points for the
covers $\cH'' \to \cH' \to \cH$. Note that we allow $\cH$ to contain
additional marked points.

\subsection{Recent dynamical breakthroughs}
We now state the results of \cite{EsMi, EsMiMo, Wright} mentioned 
in the introduction. This requires some terminology. We say that a subset $\cL_0 \subset \cH$ is a
{\em complex linear manifold defined over $\R$} if for each of the
charts $\cH \to H^1(S, \Sigma; \C) \cong \C^{}$ obtained by fixing a
marking, the image of $\cL_0$ is the intersection of an open set with
an affine subspace whose linear part is a $\C$-linear vector space
defined over $\R$. Note that the real dimension of a complex linear 
manifold is even. Given $\cL \subset \cH^{(1)}$, we denote 
$$
\widehat{\cL} \df \{tM': t>0, M' \in \cL\}. 
$$
If $\nu$ is a measure on $\cH$ then $\mu(A) = \nu(\{tx: x \in A, t \in
(0,1]\}$ is a measure on $\cH^{(1)}$ and we say that {\em $\mu$ is
  obtained by coning off $\nu$.} 
 We say that $\cL \subset \cH^{(1)}$ is an {\em affine invariant
 manifold} if it is $G$-invariant, is the support of an ergodic
$G$-invariant measures $\mu$, $\widehat{\cL}$ is
a complex linear manifold defined over $\R$, and $\mu$ is 
obtained by coning off Lebesgue measure on $\widehat{\cL}$. 

\begin{thm}[Eskin-Mirzakhani-Mohammadi]\name{EsMiMo} 
For each stratum component $\cH$ and each $M \in \cH^{(1)}$, the orbit
closure $\cL \df \overline{GM}$ is an affine invariant manifold. 
The collection of  affine invariant manifolds of $\cH$ obtained as
orbit-closures for the $G$-action is countable. If $\cL_n, \, n\geq 1$
is a sequence of distinct affine invariant manifolds of some dimension 
$k$ contained in $\HH$, then after passing to a subsequence, the set of accumulation
points 
$$\{M \in \HH: \exists M_n \in \cL_n \text{
  such that } M_n \to M\}$$ 
is an affine invariant manifold $\cL_{\infty}$ with $\dim \cL_{\infty} >k$ and $\{M_n\}
\subset \cL_{\infty}$. 
\end{thm}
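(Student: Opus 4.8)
The statement combines results of Eskin--Mirzakhani, Eskin--Mirzakhani--Mohammadi and Wright; the plan is to reconstruct their proof in three stages matching the three assertions. Write $P \subset G$ for the subgroup of upper triangular matrices. The crux is the \emph{measure classification theorem}: every ergodic $P$-invariant probability measure on $\cH^{(1)}$ is automatically $G$-invariant and is obtained by coning off Lebesgue measure on a complex linear manifold defined over $\R$. I would prove this by an exponential-drift/low-entropy argument in the tradition of Benoist--Quint and Einsiedler--Katok--Lindenstrauss: using the random walk generated by $G$ and the attendant martingale and equidistribution machinery, one studies the conditional measures of the invariant measure along the expanding horospherical direction, and shows that if the measure is not already invariant under a strictly larger connected subgroup, then the controlled relative divergence of pairs of nearby generic points can be parlayed into an additional invariance direction, a contradiction. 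Where the homogeneous setting supplies algebraic identities organizing the drift, one must here substitute analytic control of the Kontsevich--Zorich cocycle — compactness of the Hodge norm, the Avila--Gou\"ezel--Yoccoz norm estimates — together with quantitative nondivergence to control excursions into the thin parts of the stratum.

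Granting the measure classification, the first assertion follows as in Eskin--Mirzakhani--Mohammadi: equidistribution of long circular arcs inside a $G$-orbit (a consequence of the measure classification together with nondivergence) produces a $P$-invariant, hence $G$-invariant and affine, probability measure whose support is exactly $\overline{GM}$; one then checks that this support is a complex linear manifold defined over $\R$ carrying the coned-off Lebesgue measure, i.e.\ an affine invariant manifold $\cL$. For countability I would invoke Wright's structure theory: the field of definition of an affine invariant manifold — the smallest subfield of $\R$ over which $\widehat{\cL}$ is cut out by linear equations in period coordinates — is a number field, and the tangent space is rational over that field. Since an affine invariant manifold is determined, in each period-coordinate chart, by a single such rational subspace of the finite-dimensional relative cohomology, and there are only countably many pairs consisting of a number field and a rational subspace over it, there are only countably many affine invariant manifolds.

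For the isolation statement, let $\cL_n$ be distinct affine invariant manifolds of dimension $k$. Countability together with a compactness argument shows that the affine invariant manifolds of dimension at most $k$ form a locally finite family, so after passing to a subsequence the $\cL_n$ converge to a closed $G$-invariant set $\cL_\infty$ in the natural topology on closed subsets of $\cH^{(1)}$; applying the first assertion pointwise, $\cL_\infty$ is a finite union of affine invariant manifolds, and plainly $M_n \to M$ with $M_n \in \cL_n$ forces $M \in \cL_\infty$. If a component of $\cL_\infty$ had dimension at most $k$, then in a suitable chart infinitely many of the $\cL_n$ would be trapped inside this $k$-dimensional linear piece and therefore coincide with it, contradicting distinctness; hence every component has dimension strictly greater than $k$. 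A short connectedness argument, using that each $\cL_n$ is connected and that the $\cL_n$ accumulate on all of $\cL_\infty$, then shows $\cL_\infty$ is a single affine invariant manifold, which by local finiteness contains all but finitely many of the $\cL_n$.

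The main obstacle is, by a wide margin, the Eskin--Mirzakhani measure classification. The difficulty is to run the exponential-drift scheme outside the homogeneous world: one loses the group-theoretic bookkeeping that normally organizes the drift and controls the new invariance it produces, and must replace it with delicate quantitative estimates on the Hodge and Avila--Gou\"ezel--Yoccoz norms of the Kontsevich--Zorich cocycle and with careful control of recurrence to compact subsets of the stratum. By comparison, the orbit-closure deduction, Wright's field-of-definition theorem, and the semicontinuity argument behind the isolation statement are comparatively soft once the measure classification is available.
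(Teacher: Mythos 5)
This statement is not proved in the paper at all: it is quoted, with attribution, from Eskin--Mirzakhani \cite{EsMi} and Eskin--Mirzakhani--Mohammadi \cite{EsMiMo} (with Wright's field-of-definition theory \cite{Wright} relevant to countability), and the paper uses it as a black box. So there is no internal proof to compare against; what you have written is a survey-level roadmap of the cited literature rather than a proof. As a roadmap it is broadly accurate: the logical skeleton (classify ergodic $P$-invariant measures as $G$-invariant affine measures via an exponential-drift argument with the Kontsevich--Zorich cocycle, AGY norms and quantitative nondivergence standing in for homogeneous algebra; deduce orbit-closure rigidity from equidistribution of circle averages; get countability from rationality of the tangent space over a number field; prove isolation by a semicontinuity argument) is indeed how the results are established.

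The gap, however, is genuine and large: the entire measure classification --- which you yourself identify as the crux --- is ``granted,'' and that is the content of a several-hundred-page argument, so the proposal cannot be counted as a proof of the theorem. Two of the ``soft'' steps are also not quite right as stated. First, your deduction that $\cL_\infty$ ``is a finite union of affine invariant manifolds'' by ``applying the first assertion pointwise'' does not follow: a closed $G$-invariant set is in general a union of uncountably many orbit closures, and the actual isolation proof in \cite{EsMiMo} proceeds quite differently (by induction on dimension, using uniform recurrence/avoidance estimates for the affine measures, not by decomposing the limit set pointwise). Second, the claimed ``local finiteness'' of affine invariant manifolds of dimension at most $k$ is essentially equivalent to the isolation statement you are trying to prove, so invoking it at the outset is circular. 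If you intend this as a reading guide to \cite{EsMi, EsMiMo, Wright} it is serviceable; as a self-contained proof it is not.
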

Note that the results of \cite{EsMiMo} work for strata with marked
points, i.e.\ they allow $\alpha_i=0$.

Suppose that the number of singularities $k$ is at least  $2$. Let
$H_1(S)$ and $H_1(S, \Sigma)$ denote respectively the absolute and
relative homology groups. Then we have $H_1(S) \subset H_1(S, \Sigma)$
and we can restrict each 1-cocycle in $H^1(S, \Sigma; \C)$ to the
subspace $H_1(S)$; that is we get a natural restriction map $H^1(S,
\Sigma; \C) \to H^1(S; \C)$. The kernel REL of this map is a subspace
of $H^1(S, \Sigma; \C)$ of real dimension $2(k-1)$,  and we have a foliation
of $H^1(S, \Sigma; \C)$ by cosets of REL. Since the restriction map 
$H^1(S,
\Sigma; \C) \to H^1(S; \C)$ is topological, the space REL is
independent of a marking, that is can be used to unequivocally define
a linear foliation of $\cH(\vec{\alpha})$ using period coordinates. This
foliation of $\cH(\vec{\alpha})$ is called the {\em REL
  foliation}. The $G$-action respects the REL foliation and hence we
have a linear foliation of $\cH$ by leaves tangent to
$\mathfrak{g} \oplus \REL$, where we use $\mathfrak{g}$ to denote the tangent to the
foliation by $G$-orbits. We denote this foliation by $\GREL$. Following
\cite{Wright}, if a closed
$G$-invariant and $G$-ergodic linear manifold $\cL$ is contained in a single leaf
of the foliation $\GREL$, we say that it is {\em of cylinder rank
  one}. A translation surface $M$ is \emph{completely periodic} if in any
cylinder direction on $M$ there is a complete cylinder
decomposition.

\begin{thm}[Wright \cite{Wright}, Theorems 1.5 and 1.6]
\name{prop: Wright equivalences}
A linear manifold $\cL$ as above is of cylinder rank one if and only if any surface in  $
\cL$ is completely periodic. 
\end{thm}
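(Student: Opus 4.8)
The plan is to follow Wright, whose central device is the \emph{Cylinder Deformation Theorem}: for an affine invariant manifold $\cL$, a surface $M\in\cL$, and an equivalence class $\mathcal{C}$ of \emph{$\cL$-parallel} cylinders on $M$ --- two cylinders being $\cL$-parallel if the ratio of their holonomy vectors stays constant as $M$ varies in $\cL$ --- the one-parameter families obtained from $M$ by shearing the cylinders of $\mathcal{C}$ parallel to their cores, and by stretching them transversally, remain in $\cL$. I would first prove this using Theorem \ref{EsMiMo}: near $M$ the linear manifold $\cL$ is cut out by real-linear equations in period coordinates, and the infinitesimal cylinder deformation $\sigma_{\mathcal{C}}$ is an explicit relative cohomology class supported on the core curves of $\mathcal{C}$; the relation ``$\cL$-parallel'' is engineered precisely so that the equations defining $T_M\cL$ annihilate $\sigma_{\mathcal{C}}$. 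Throughout, recall that ``$\cL$ is of cylinder rank one'' means $T_M\cL\subseteq\fg\oplus\REL=\fg\oplus\ker p$ for $p\colon H^1(S,\Sigma;\C)\to H^1(S;\C)$; since $\fg\subseteq T_M\cL$ always, this is equivalent to $p(T_M\cL)=p(\fg)$, i.e.\ the absolute cohomology of $\cL$ varies no more than the $\SL(2,\R)$-direction forces.

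For the implication \emph{cylinder rank one $\Rightarrow$ completely periodic}, I would take $M\in\cL$ and a direction carrying a cylinder, rotate it to be horizontal, and show that the equality $p(T_M\cL)=p(\fg)$, worked out in period coordinates, forces all horizontal cylinders of $M$ to lie in a single $\cL$-parallel class $\mathcal{C}$ and, more importantly, makes the horizontal configuration rigid enough that applying the Cylinder Deformation Theorem to $\mathcal{C}$ and letting its transverse modulus tend to zero can only degenerate $M$ onto a surface with strictly fewer cylinders and no loss of structure; iterating, there is no room for a non-cylinder minimal component, so the horizontal direction is a full cylinder decomposition. For \emph{completely periodic $\Rightarrow$ cylinder rank one}, I would use that cylinder directions are dense, so after applying $\SL(2,\R)$ there is a horizontally periodic $M\in\cL$; if its horizontal cylinders formed two distinct $\cL$-parallel classes $\mathcal{C}_1,\mathcal{C}_2$, then, these not being $\cL$-parallel, there is a nearby $M'\in\cL$ on which the corresponding cylinders either acquire distinct directions --- rotating the $\mathcal{C}_1$-family to horizontal then exhibits a surface in $\cL$ with a horizontal cylinder whose complement has horizontal foliation not consisting only of closed leaves, contradicting complete periodicity --- or differ only in the ratio of their widths, which I would again push toward the boundary of the stratum. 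Hence there is a single class, and then the Cylinder Deformation Theorem together with the REL directions accounts for all of $T_M\cL$, giving $T_M\cL\subseteq\fg\oplus\REL$.

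The main obstacle is this degeneration step in both directions: one must make sense of the limit of a collapsing-cylinder family as a surface in a boundary stratum, where $\cL$ limits to a new affine invariant manifold (in the sense of Mirzakhani--Wright), verify that the relevant property --- cylinder rank one, respectively the presence of an obstructing minimal or width-mismatch configuration --- is inherited by that limit manifold, and organize the argument around a complexity (the number of cylinders, or the dimension data of the stratum) that strictly decreases, so that the induction terminates. A further subtlety is that the cohomological computations with $p$ are not pure dimension counts, since Poincar\'e duals of disjoint unions of core curves can be linearly dependent; one must use the positivity of cylinder heights and widths to get the needed rigidity. Granting these, the two easy halves plus the induction give the equivalence.
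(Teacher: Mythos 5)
First, a point of comparison: the paper does not prove this statement at all --- it is quoted as Theorems~1.5 and~1.6 of Wright's paper and used as a black box --- so what you have written is a sketch of an external theorem rather than an alternative to an internal argument. Judged on its own terms, your sketch correctly identifies the Cylinder Deformation Theorem as the engine, but two load-bearing steps are genuine gaps, not details. The first is the claim that the Cylinder Deformation Theorem holds because the relation ``$\cL$-parallel'' is ``engineered precisely so that the equations defining $T_M\cL$ annihilate $\sigma_{\mathcal{C}}$.'' This is backwards. Being $\cL$-parallel only says that the ratios of core-curve holonomies are locally constant on $\cL$; it does not formally place the shear/stretch class of an equivalence class inside $T_M\cL$. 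Establishing that is the main theorem of Wright's paper, and its proof is genuinely dynamical: it uses invariance of $\cL$ under the horocycle flow, the measure rigidity of Eskin--Mirzakhani(--Mohammadi), and a limiting argument about the twist space of a horizontally periodic surface. Presenting it as a tangent-space computation in period coordinates erases the hardest part of the proof.

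The second gap is the direction ``rank one $\Rightarrow$ completely periodic,'' where you collapse the transverse modulus of $\mathcal{C}$ and induct over degenerations. That route requires making sense of limits of $\cL$ in boundary strata (Mirzakhani--Wright boundary theory, which postdates the theorem being proved), and even granted that machinery it does not by itself exclude a minimal component in the complement of $\mathcal{C}$. Wright's actual argument is direct and cohomological: the vertical stretch class $i\sigma_{\mathcal{C}}$ of the equivalence class lies in $T_M\cL$ by the Cylinder Deformation Theorem; rank one forces its image under $p$ into the span of $p(\mathrm{Re}\,\omega)$ and $p(\mathrm{Im}\,\omega)$; and pairing against cycles (core curves are horizontal, so $\sigma_{\mathcal{C}}$ pairs trivially with anything supported off the cylinders, while $\mathrm{Im}\,\omega$ does not vanish on cycles crossing a minimal component) shows the cylinders of $\mathcal{C}$ must fill the surface. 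No degeneration is needed. Your converse direction has a similar soft spot: perturbing two non-$\cL$-parallel classes into distinct directions does not immediately contradict complete periodicity of the perturbed surface in its new cylinder direction; Wright closes this loop with his result that a rank~$\geq 2$ manifold contains a horizontally periodic surface whose twist space strictly exceeds the tangent directions accounted for by a single parallelism class. As written, your plan would need all three of these inputs supplied before it becomes a proof.
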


We will need the following Lemma. Note that its assertion would be trivial if the fiber
of $\varphi$ were compact. 

\begin{lem}\name{lem: projection closed}
Let $M \in \cH$ and $M'' \in \varphi^{-1}(M) \subset \cH''$. Let $\cL
\df \overline{GM}$ and $\cL'' \df \overline{GM''}$. Then
$\varphi|_{\cL''}$ is an open mapping and hence $\dim \varphi(\cL'') =
\dim \cL.$ 
\end{lem}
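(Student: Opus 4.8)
The plan is to work directly with the $G$-equivariance of $\varphi$ and the structure of affine invariant manifolds, reducing the openness statement to a dimension count and an application of Theorem~\ref{EsMiMo}. First I would recall that $\varphi : \cH'' \to \cH$ is $G$-equivariant with fibers $(S \sm \Sigma)^2 \sm \Delta$, which are linear submanifolds in period coordinates carrying the Cartesian product translation structure; in particular $\varphi$ is an open map on all of $\cH''$, being a bundle projection. By Theorem~\ref{EsMiMo}, both $\cL = \overline{GM}$ and $\cL'' = \overline{GM''}$ are affine invariant manifolds, hence (the closures $\widehat{\cL}, \widehat{\cL''}$ of their cones) are complex linear manifolds defined over $\R$, cut out in period coordinates by affine equations with $\C$-linear defined-over-$\R$ linear part. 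Since $\varphi$ is induced on period coordinates by the linear forgetful (restriction-of-cochain) map $H^1(S,\Sigma''; \C) \to H^1(S,\Sigma; \C)$, the image $\varphi(\cL'')$ is locally the image of an affine subspace under a surjective linear map, hence is itself (an open subset of) an affine subspace; in particular $\varphi(\widehat{\cL''})$ is a complex linear manifold defined over $\R$, and $\varphi(\cL'')$ has some well-defined dimension near $M$.

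Next I would show $\varphi(\cL'') \supseteq \cL$. One inclusion is easy: $\varphi(\cL'') \subseteq \varphi(\overline{GM''}) \subseteq \overline{\varphi(GM'')} = \overline{G\varphi(M'')} = \overline{GM} = \cL$. For the reverse inclusion one must show that the closure does not shrink the image — equivalently, that $\varphi(\cL'')$ is closed in $\cL$. This is exactly the content that would be trivial if the fiber were compact, and it is the main obstacle: $\varphi(\cL'')$ is the image of a closed set under a proper-on-fibers-but-not-proper map, so a priori a sequence $M_n'' \in \cL''$ with $\varphi(M_n'') \to N \in \cL$ could have the marked points escaping to infinity inside the noncompact fiber (two marked points colliding, or a marked point running off to a singularity, in the limit). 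To rule this out I would use that $\cL''$ is $G$-invariant and is the support of an ergodic $G$-invariant measure: the marked-point coordinates of a point of $\cL''$ lie in a $G$-invariant (indeed $\GREL$-respecting) affine subspace of period coordinates, and the positions of $x_{k+1}, x_{k+2}$ relative to the underlying surface are constrained by this linear structure. Concretely, I would pull back the hypothetical limit: pick $N'' \in \varphi^{-1}(N)$ that is a limit of a subsequence of the $M_n''$ after possibly applying bounded elements of $G$ to stay in a compact transversal; the affine-linear description of $\cL''$ in period coordinates forces this limit to stay in $\cL''$ provided we can keep it in the ambient stratum $\cH''$, i.e.\ provided the marked points stay distinct and away from $\Sigma$ in the limit. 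The key point is that the (closed, $G$-invariant) set $\cL''$, being an affine invariant manifold, is locally the graph over an open subset of $\varphi(\cL'')$ of an affine map recording the marked-point data; since an affine map is continuous and defined on all of the base, no escape to the boundary of the fiber can occur over a point of $\varphi(\cL'')$ without the point already lying in $\varphi(\cL'')$, which closes the argument.

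Then openness of $\varphi|_{\cL''}$ follows formally: a surjective affine-linear map between complex linear manifolds defined over $\R$ (which is what $\varphi : \widehat{\cL''} \to \varphi(\widehat{\cL''}) = \widehat{\cL}$ becomes, once we know the image is all of $\widehat{\cL}$) is open onto its image in period coordinates, and passing to the cross-sections $\cH^{(1)}$ preserves openness. The equality $\dim \varphi(\cL'') = \dim \cL$ is then immediate, since we will have shown $\varphi(\cL'') = \cL$. I expect the genuinely nontrivial step to be the closedness/no-escape argument in the second paragraph; everything else is bookkeeping with the linear-algebraic description of strata and the $G$-equivariance of the forgetful maps, together with a direct appeal to Theorem~\ref{EsMiMo} to know that both orbit closures are affine invariant manifolds rather than merely closed $G$-invariant sets.
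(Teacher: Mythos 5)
Your first paragraph is fine as far as it goes, but it only establishes that $\varphi|_{\cL''}$ is open \emph{onto its image}: locally $\widehat{\cL''}$ is an open piece of an affine subspace $A$ and $\varphi$ is linear in period coordinates, so $\varphi(\widehat{\cL''})$ is locally an open piece of the affine subspace $\varphi(A)$. What the lemma requires is that $\varphi(A)$ is not a \emph{proper} affine subspace of the local model of $\widehat{\cL}$, i.e.\ that the derivative of $\varphi|_{\cL''}$ surjects onto the tangent space of $\cL$. Your route to this is the claim $\varphi(\cL'')=\cL$, obtained by arguing that $\varphi(\cL'')$ is closed in $\cL$ (so that it must contain $\overline{GM}=\cL$). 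That claim is false, and the paper itself exhibits a counterexample: for a lattice surface $M$ with an involution $\tau$ and $M''$ obtained by marking a non-periodic point $x$ and $y=\tau(x)$, one has $\cL''\subset\{(M_0,x_0,y_0): \tau(x_0)=y_0\neq x_0\}$, so the image under the map forgetting the second point misses every $(M_0,x_0)$ with $\tau(x_0)=x_0$; the paper notes the same phenomenon for $\varphi$ itself. Your ``no escape to the boundary of the fiber'' argument breaks down exactly here: even when $\cL''$ is globally the graph of an affine map in the fiber direction (as in this example, $y_0=\tau(x_0)$), that graph can meet the deleted diagonal $\Delta$ (marked points colliding) or the deleted singular set, so a sequence in $\cL''$ whose image converges in $\cL$ can perfectly well leave the stratum $\cH''$ in the limit. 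You flag this danger yourself and then dismiss it without a valid reason.

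The paper closes the gap by an entirely different, measure-theoretic device, which you would need some version of. By Theorem \ref{EsMiMo} there are ergodic $G$-invariant probability measures $\mu$, $\mu''$ with $\cL=\supp\mu$ and $\cL''=\supp\mu''$, and the equidistribution theorems of \cite{EsMiMo} provide averaging measures $\nu_T$ on $G$ with $\int f(gM)\,d\nu_T\to\int f\,d\mu$ and likewise for $M''$; applying this to $f''=f\circ\varphi$ (continuous with a limit at infinity) and using equivariance gives $\mu=\varphi_*\mu''$. Then, if the derivative of $\varphi|_{\cL''}$ failed to be surjective at some point, a neighborhood $\mathcal{U}$ in $\cL''$ would map into a proper affine submanifold of $\cL$, which is $\mu$-null, forcing $\mu''(\mathcal{U})=0$ and contradicting $\cL''=\supp\mu''$. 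Note that this yields openness and $\dim\varphi(\cL'')=\dim\cL$ without ever claiming that $\varphi(\cL'')$ is closed or equal to $\cL$. As written, your proof has a genuine gap at its central step.
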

\begin{proof}
According to \cite{EsMiMo}, there
are Borel probability measures $\mu, \, \mu''$ on $\cH, \, \cH''$
respectively such that $\cL = \supp \mu, \, \cL''=\supp \mu''$. We
first claim that 
$\mu =
\varphi_*\mu''$. To this end note that Theorems 2.6 and 
2.10 in \cite{EsMiMo} provide an averaging method converging to $\mu, \mu''$;
that is, in both of these theorems, one finds probability measures
$\nu_T$ on $G$, such that for any continuous compactly supported
functions $f, f''$ on $\cH$ and $\cH''$ respectively, 
$$
\int_{G} f(gM) d\nu_T(g) \to_{T \to \infty} \int_{\cH} f d\mu
\text{ (resp., }\int_{G} f(gM'') d\nu_T(g) \to_{T \to \infty}
\int_{\cH''} f'' d\mu'' \text{ ).} 
$$ 
By a standard argument we may assume that this is also true if $f''$
is continuous and has a finite limit at infinity; in particular, for
$f \in C_c(\cH)$ we may take $f'' = f \circ \varphi$. Thus by
equivariance we have 
$$
\int_{\cH} f d\mu \longleftarrow \int_{G} f(gM) d\nu_T(g) = \int_{G}
f''(gM'') d\nu_T(g) \longrightarrow \int_{\cH''} f\circ \varphi\, d\mu'', 
$$
and this implies that $\mu = \varphi_*\mu''$. 

The map $\varphi|_{\cL''} : \cL'' \to \cL$ is an affine map of affine
manifolds. In order to show that it is open it suffices to show that
its derivative is surjective at every point $x \in \cL''$. If not,
then there is a neighborhood $\mathcal{U}$ of $x$ in $\cL''$ such that 
$\varphi(\mathcal{U})$ is contained in a proper affine submanifold of $\cL$. Such a proper affine submanifold must have
zero measure for the flat measure class on $\cL$,
i.e. $\mu(\varphi(\mathcal{U}))=0$. By the preceding paragraph this implies
$\mu''(\mathcal{U})=0$ which is impossible. 
\end{proof}

\subsection{The Veech group, lattice surfaces, and periodic
  points}\name{subsec: Veech groups}
An {\em affine automorphism} of a translation surface $M$ is a
 homeomorphism
$\varphi: M \to M$ which is affine in charts. In this case, by connectedness, its
derivative $D\varphi$ is a constant $2 \times 2$ matrix of determinant $\pm 1$. We
denote by $\Aff^+(M)$ the group of orientation-preserving affine automorphisms,
i.e.\ those for which $D \varphi \in G$. We say that $\varphi$ is a
{\em parabolic automorphism} 
if
$D \varphi$ is a parabolic matrix, i.e., is not the identity
but has both eigenvalues equal to 1. 
The
{\em Veech group} of $M$ 
is the image under the homomorphism $D: \Aff^+(M) \to G$ of the group of
orientation-preserving affine automorphisms. We say that $M$ is a {\em
  lattice surface} if its Veech group is a lattice in
$G$. Equivalently, by a theorem  of Smillie (see
\cite{Veech_hayashibara, toronto}), the orbit $GM$ is
closed. Following \cite{HST} we say
that $M$ is a {\em pre-lattice surface} if $\Aff^+(M)$ contains
two non-commuting parabolic automorphisms. 
%
Veech \cite{Veech_alternative}
showed that a lattice surface is a pre-lattice surface, 
justifying the terminology.  A point $x \in M$ is called {\em
  periodic} if its orbit under $\Aff^+(M)$ is finite. 

\subsubsection{Example}
In Lemma \ref{lem: projection closed} we showed that
$\varphi''|_{\cL''} : \cL'' \to \cL$ is an open map. Given that $\cL$
is connected, this leads to the
question of whether $\varphi|_{\cL''}$ is surjective. 
The following example of Alex Wright shows that an
open affine map of orbit-closures need not be surjective. Let
$M \in \cH$ be a lattice surface which admits an involution $\tau$
(e.g. $M$ could be a surface of 
genus 2 and $\tau$ could be the hyper-elliptic involution). Let $\cL = GM$ be the orbit of $M$ (which in this case
coincides with the orbit closure), let $x \in M$ be a non-periodic
point, and let $M'\df (M,x)$ be the surface in $\cH'$ obtained by marking
the point $x$. It was proved in \cite{HST}, and follows easily from
Theorem \ref{EsMiMo}, that $\cL' \df \overline{GM'}$ coincides with
$\varphi'^{-1}(GM)$ (i.e. all surfaces in $GM$ marked at all
nonsingular points). Now let $y \df \tau(x) \neq x$, let $M'' \df
M(x,y)$ be the surface in $\cH''$ obtained by marking $M$ at the two
points $x,y$, let $\cL'' \df \overline{GM''}$, and let $\varphi'':
\cH'' \to \cH'$ be
the affine map which forgets the second marked point. We have 
$$
\cL'' \subset \{(M_0, x_0, y_0) \in \cH'': M_0 \in \cL, \, \tau(x_0)=y_0
\neq x_0\}, 
$$  
since the set on the right-hand side is closed and $G$-invariant. This
implies that $\varphi''(\cL'') \subset \{(M_0, x_0): M_0 \in GM,
\tau(x_0) \neq x_0\}$, and in particular $\varphi''|_{\cL''}$ is not
surjective. However the proof of Lemma \ref{lem: projection closed}
shows that $\varphi''|_{\cL''}$ is open. 

Using one additional marked point one can find similar
examples that show that in general, in Lemma \ref{lem: projection
  closed}, one need not have $\varphi(\cL'') = \cL$.   

\section{Bounded blocking defines closed sets}

Let $M$ be a translation surface with singularity set $\Sigma$, and
let $\widehat{M^2} = \{(x, y) \in (M \sm \Sigma)^2: x \neq y\}.$ If
$Z$ is a topological space and $A \subset B$ are subsets of $Z$, when
we say that {\em $A$ is closed as a subset of $B$}, we mean that $A$ is closed in the
relative topology, i.e. $A = B \cap \overline{A}$. 

\begin{lem}\name{lem: closed sets}
For any fixed integer $n \geq 0$, the following hold:
\begin{enumerate}
\item[(I)]
For a fixed translation surface $M$, the set 
$$F_n(M)  \df \{(x,y) \in \widehat{M^2} : \bc(x, y) \leq n\}$$
is closed as a subset of
$\widehat{M^2}. $
\item[(II)]
For a fixed translation surface $M$, and a fixed nonsingular $x \in M$, the set
$$F_n(M,x) \df \{y\in M \sm (\Sigma \cup \{x\}): \bc(x,y) \leq n\}$$
is closed as a subset of $M \sm (\Sigma \cup \{x\})$.  
\item[(III)]
The set $\cF_n \subset \cH''$ consisting of all surfaces on which the
first and second marked points are finitely blocked of blocking
cardinality at most $n$, is closed in $\cH''$. 
\item[(IV)] For a fixed stratum $\cH$, the set of $M_0 \in \cH$ for
  which any pair $(x, y) \in \widehat{M_0^2}$ satisfies $\bc(x,y) \leq
  n$ is closed in $\cH$. 

\end{enumerate}

Moreover, there is $\ell$ such that
if the set 
\eq{eq: appears in proof}{
\left\{(x,y) \in M^2: \bc(x,y) \leq n \right\}
}
is dense in $M^2$, then
$M$ has the bounded blocking property
with blocking cardinality at most $\ell$.
\end{lem}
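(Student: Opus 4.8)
The plan is to deduce everything from one lower-semicontinuity property of $\bc$ on a compact surface. First, (II) is immediate from (I): for fixed nonsingular $x$ the map $y\mapsto (x,y)$ is a continuous map $M\setminus(\Sigma\cup\{x\})\to\widehat{M^2}$, and $F_n(M,x)$ is the preimage of the closed set $F_n(M)$. Next, (IV) follows from (III) together with the fact, recalled in \S\ref{subsec: adding points}, that $\varphi\colon\cH''\to\cH$ is a bundle map, hence an open map: a surface $M_0\in\cH$ lies in the set described in (IV) if and only if the whole fiber $\varphi^{-1}(M_0)\cong\widehat{M_0^2}$ is contained in $\cF_n$, so the complement of that set equals $\varphi(\cH''\setminus\cF_n)$, which is open since $\cF_n$ is closed by (III). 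Thus the real content is (I), its family version (III), and the final displayed assertion, which I treat in turn.

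\textbf{Proof of (I) and (III).} For (I), suppose $(x_j,y_j)\to(x,y)$ in $\widehat{M^2}$ with $\bc(x_j,y_j)\le n$, witnessed by blocking sets $B_j$ of cardinality $\le n$. Since $M$ is compact, after passing to a subsequence I may write $B_j=\{b_j^{(1)},\dots,b_j^{(m)}\}$ with $b_j^{(i)}\to b^{(i)}$, and set $B=\{b^{(1)},\dots,b^{(m)}\}$, so $|B|\le n$. The geometric input is that any straight-line trajectory $\gamma$ from $x$ to $y$ has compact image whose interior avoids the cone points, hence is contained in an embedded flat strip; for $j$ large the endpoints $x_j,y_j$ lie near the two ends of this strip, so there is a trajectory $\gamma_j$ from $x_j$ to $y_j$ inside it with $\gamma_j\to\gamma$. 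Since $\gamma_j$ meets $B_j$, a further subsequence shows $\gamma$ meets $B$; thus $B$ meets every trajectory from $x$ to $y$. If $B\cap\{x,y\}=\varnothing$ we are done. In general, given a trajectory $\gamma$ from $x$ to $y$ whose only intersection with $B$ lies in $\{x,y\}$, peel off the finitely many initial geodesic sub-loops of $\gamma$ based at $x$ and the final sub-loops based at $y$ to obtain a core trajectory $\bar\gamma\subseteq\gamma$ from $x$ to $y$ whose interior avoids both endpoints (this terminates since each peeled loop has length at least the length of the shortest geodesic loop at that point); applying the strip argument to $\bar\gamma$ shows $\bar\gamma$, and hence $\gamma$, meets $B$ at an interior point of $\bar\gamma$, which lies in $B\setminus\{x,y\}$. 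Therefore $B\setminus\{x,y\}$ is a blocking set of cardinality $\le n$ and $\bc(x,y)\le n$. The proof of (III) is the same: if $M''_j\to M''$ in $\cH''$ and the two marked points of $M''_j$ are blocked with cardinality $\le n$, one uses the near-isometries between nearby surfaces in the stratum (away from singularities) to transport the blocking sets $B_j\subset M_j$ to $M$, extracts a limit $B$ by compactness of $M$, and runs the identical strip-and-peeling argument on $M$.

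\textbf{The final assertion.} By (I), $F_n(M)=\{(x,y)\in\widehat{M^2}:\bc(x,y)\le n\}$ is closed in $\widehat{M^2}$; if the set displayed in \equ{appears in proof} is dense in $M^2$, then its intersection with the open dense subset $\widehat{M^2}$ is dense there, so it is all of $\widehat{M^2}$, i.e. $\bc(x,y)\le n$ for every pair of distinct nonsingular points. It remains to bound $\bc$ uniformly on $M^2\setminus\widehat{M^2}$, namely on the diagonal and on pairs involving a singular point. For a diagonal pair $(p,p)$ with $p$ nonsingular, approximate by distinct nonsingular pairs $(x_j,y_j)\to(p,p)$, take the limit blocking set $B$ ($|B|\le n$) as above, decompose any geodesic loop at $p$ into simple geodesic sub-loops, approximate each simple sub-loop directly by trajectories from $x_j$ to $y_j$ running once around it, and conclude as in (I) that $B\setminus\{p\}$ blocks $(p,p)$; thus $\bc(p,p)\le n$. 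For a pair one or both of whose members is a cone point $\xi_i$ of cone angle $2\pi(\alpha_i+1)$, the same scheme applies, but a single approximating sequence now only captures the trajectories emanating from $\xi_i$ into directions near the limiting direction of approach; one therefore runs the argument for finitely many sequences approaching each cone point along a net of directions fine enough (a number bounded in terms of $\alpha_i$, hence in terms of the stratum) to capture all emanating directions, and takes the union of the resulting blocking sets. This yields $\bc(x,y)\le\ell$ for every $(x,y)\in M^2$, where $\ell=\ell(n,\cH)$ is a fixed multiple of $n$ depending only on $n$ and the stratum, so $M$ has the bounded blocking property with cardinality at most $\ell$.

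\textbf{Main obstacle.} The conceptual crux, recurring in (I) and in the final assertion, is that the limiting blocking set $B$ may a priori acquire a blocking point that has slid onto an endpoint (in (I)) or onto a cone point (in the last step): the loop-peeling argument is what repairs this without enlarging $B$ in the nonsingular cases, and the net-of-directions refinement is what controls the bounded enlargement needed at cone points. Making the latter count explicit and uniform over the stratum — so that $\ell$ indeed depends only on $n$ and $\cH$ — is the most delicate bookkeeping; the family version (III) additionally requires care in using near-isometries to make sense of "the same" blocking set on nearby surfaces.
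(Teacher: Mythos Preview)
Your reductions (II) from (I) and (IV) from (III) are fine, and the overall skeleton (take limits of blocking sets, then worry about points that landed on $x$ or $y$) matches the paper. But there is a genuine gap in your proof of (I)/(III), at the step you yourself flag as the crux.

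You claim that for a simple trajectory $\bar\gamma$ from $x$ to $y$ (interior avoiding $\{x,y\}$), the strip argument shows $\bar\gamma$ meets $B$ \emph{at an interior point}. It does not. The strip argument gives $\bar\gamma_j\to\bar\gamma$ and some $b_j^{(i)}\in B_j$ on $\bar\gamma_j$; passing to a subsequence, $b_j^{(i)}\to b^{(i)}\in B$ lies on $\bar\gamma$. But if $b^{(i)}=x$, the parameter $t_j$ at which $\bar\gamma_j(t_j)=b_j^{(i)}$ can tend to $0$, so the limit point is the \emph{endpoint} $x=\bar\gamma(0)$, not an interior point. Your peeling step only ensures the interior of $\bar\gamma$ avoids $\{x,y\}$; it does nothing to prevent the approximating intersection from sliding onto an endpoint. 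Consequently the assertion that $B\setminus\{x,y\}$ blocks $(x,y)$ is unjustified (and, in particular, if every $b^{(i)}$ landed on $x$, your candidate blocking set would be empty).

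The paper closes this gap by \emph{modifying} the limit set rather than discarding points: for each $i$ with $b^{(i)}=x$, it looks at the short segment $\delta_k^{(i)}$ from $x_k$ to $b_k^{(i)}$ inside a disk $B(x_k,r)$, pushes out along it to the point $B_k^{(i)}$ at distance $r$, and replaces $b^{(i)}$ by $B^{(i)}=\lim_k B_k^{(i)}$. Then, for a \emph{simple} $\sigma$, if the approximating $\sigma_k$ hit $b_k^{(i)}$ but missed $B_k^{(i)}$, the initial arc of $\sigma_k$ from $x_k$ to $b_k^{(i)}$ would have to differ from $\delta_k^{(i)}$ and hence have length $\ge r$, forcing a nontrivial loop of $\sigma$ at $x$---contradicting simplicity. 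This ``push-out along the direction of approach'' is exactly the missing idea; once you insert it, your scheme for the final assertion (diagonal pairs, then a finite net of approach-directions at cone points) coincides with the paper's, and the same modification is what makes those cases go through.
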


\begin{proof}
We will denote a surface in $\cH''$ by
$(M,x,y)$, where $x$ and $y$ are respectively the first and second
marked points on $M$. The topology on $\cH''$ is such that
when $(M_k, x_k, y_k) \to (M,x,y)$, for any parametrized line segment
$\{\sigma(t): t \in [0,1]\}$ on $M$ between $x$ and $y$, for any large
enough $k$ there are parametrized line segments $\{\sigma_k(t): t \in
[0,1]\}$ such that $\sigma_k(t) \to \sigma(t)$ for all $t$ -- see
\cite{MT, Zorich} for details. Here a parameterized line segment is a
constant speed straight line in each chart and does not contain
singular points in its interior.   

We begin with the proof of (III). 
Let $(M_k,x_k,y_k)$ be a sequence that converges to $(M,x,y)$ in
$\cH''$, where $(x_k, y_k)$ belongs to $F_n(M_k)$ for all $k$. Let
$\left\{b^{(1)}_k,
    \dots, b^{(n)}_k\right\} \subset M_k$ be a blocking set for $(x_k,
  y_k)$. Passing to a 
    subsequence, we may assume that $b^{(i)}_k$ converges to a point
    $b^{(i)} \in M$ for
    each $i$. By the above description of the topology of $\cH''$, if
    $\left\{b^{(1)}, \dots, b^{(n)}\right\}$ does not contain $x$ 
    or $y$ then it is a blocking set for
    $(x,y)$ in $M$ and we are done. 

We now discuss the case that some of the $b^{(i)}$ are equal to $x$ or
$y$. We modify the set $\left\{b^{(1)}, \ldots, b^{(n)}\right\}$ as follows.  For
any $i$ for which $b^{(i)}$ is different from both $x$ and $y$, we set
$B^{(i)} =     b^{(i)}$. Suppose $i$ is such that $b^{(i)}=x$. 
    Let $r>0$ be smaller than half the length of the shortest saddle
    connection on $M$. This implies that $r$ is smaller than half the
    distance between $x$ and $y$, and that there is no singularity in the
    ball $B(x,r)$ with center $x$ and radius $r$.

    For $k$ large enough, $B(x_k,r)$ is an embedded flat disk in $M_k$
    that contains $b^{(i)}_k$, and 
    there is a unique trajectory $\delta_{k}^{(i)}$ from $x_k$ to $b^{(i)}_k$ that stays
    within this disk. 
 Let  $B^{(i)}_k$ be the point on $\delta^{(i)}_k$ at distance $r$
    from $x_k$. Passing again to a subsequence we assume that $B^{(i)}_k$ converges to a
    point $B^{(i)}$ in $M$. Note that this point is distinct from $x$
    and $y$ for each such $i$. We repeat this procedure for each $i$
    for which $b^{(i)}$ is equal to either $x$ or $y$, passing at each
    stage to a further subsequence.

    Let us prove that $\left\{B^{(1)}, \dots, B^{(n)}\right\}$ is a blocking set for $(x,y)$ in
    $M$. Let $\sigma$ be a trajectory from $x$ to $y$. We can assume
    without loss of generality that $\sigma$ is simple, i.e. does not
    intersect itself. 
    Let $\sigma_k$ be the segment 
    between $x_k$ and $y_k$ that converges pointwise to $\sigma$.
    If $\sigma_k$ meets one of the $B^{(i)}_k$ for infinitely many $k$, $B^{(i)}$ belongs to
    $\sigma$ and we are done.

    Assume by contradiction that there is an index $i$ such that, for
    infinitely many $k$, $\sigma_k$ meets $b^{(i)}_k$ but not any $B^{(j)}_k$. In
    particular, $b^{(i)}_k$ converges to either $x$ or $y$. Suppose
    for concreteness that it converges to $x$. Since
    $B^{(i)}_k$ does not belong  
    to $\sigma_k$, the subsegment $\sigma'_k$  of $\sigma_k$ between
    $x$ and $b^{(i)}_k$ is not equal to the segment $\delta_{k}^{(i)}$
    defined above. In particular the length of this subsegment is
    bounded below and it converges to a nontrivial subsegment
    $\sigma'$ of $\sigma$, which 
is a loop from $x$
    to $x$. This contradicts the simplicity of $\sigma$, completing
    the proof of (III).

Clearly (III) $\implies $ (I) $\implies $ (II) and (III) $\implies$
(IV). It remains to
prove the final assertion. Suppose $x, y \in M$ and there
are $x_k \to x, y_k \to y$ such that $\bc(x_k, y_k) \leq n$. We need
to prove that $\bc(x, y) \leq \ell$, for some $\ell$ which depends
only on $M$ and $n$. If $x,y$
are distinct nonsingular points, then for large enough $k$ the points
$x_k, y_k$ are also distinct and nonsingular, and the claim follows
from (I). We will consider three cases, adapting the proof of (III) to
each one:

{\em Case 1. } 
$x=y$ is a nonsingular point. We will show that in this case the
previous proof applies and we can take $\ell=n$. 

The segment $\sigma$ from $x$ to $x$ is not
contained in the ball $B = B(x,r)$ appearing in the proof. The only place in the proof of (III) in which
we used that $x \neq y$ is that we needed to know that the subsegment
$\sigma'$ of $\sigma$ constructed in the proof is a proper subsegment
of $\sigma$. In the case $x=y$ there are two subsegments $\sigma'_k,
\sigma''_k$ between $x$ and $b^{(i)}_k$, neither of which is equal to
$\delta_{k}^{(i)}$. In particular each of them leaves the disk $B(x_k,r)$
and hence has length at least $r$. So in the limit they both converge
to nontrivial loops $\sigma', \sigma''$ from $x$ to itself, whose
concatenation is $\sigma$. This gives the desired contradiction to the
simplicity of $\sigma$. 

{\em Case 2.}
$ x \neq y$ and at
least one of them is a singularity. We will show that in this case we
can take $\ell= n(\tau+1)^2,$ where $\tau \pi$ is the maximal cone angle of a
singularity on $M$.  

Assume that $x$ is a singularity, let $r$ be as in
the preceding proof, and let
$\mathcal{U}_1, \ldots, \mathcal{U}_{\tau+1}$ be open convex subsets
of $M$ of diameter less than $r$, such
that $\bigcup \mathcal{U}_s= B(x,r) \sm \{x\}$ and $x$ is in the
closure of each $\mathcal{U}_s$. Such sets exist by
our choice of $\tau$ and $r$, e.g. we may take them to be open half-disks
centered at $x$. If $y$ is also a 
singularity, we similarly choose $\mathcal{U}'_1, \ldots,
\mathcal{U}'_{\tau+1}$ covering $B(y,r) \sm \{y\}$. 

We now choose sequences $x_k^{(s)}$ such that $x_k^{(s)} \in
\mathcal{U}_s$ and $x_k^{(s)} \to_{k \to \infty} x$. If $y$ is also a
singularity we similarly choose sequences $y^{(t)}_k$ which approach
$y$ from within $\mathcal{U}'_t$. We also require that $\bc\left(x_k^{(s)},
y^{(t)}_k\right) \leq n$
for each $k,s,t$. Such sequences exist since \equ{eq:
  appears in proof} is dense. For each choice of $(s,t) \in \{1,
\ldots, \tau+1\}^2$ we perform the procedure explained in the proof of
(III). Namely we take blocking sets $\left\{b_k^{(i,s,t)}: i=1,
  \ldots, n \right\}$ which block all segments between $x_k^{(s)}$ and
$y_k^{(t)}$, pass to subsequences to assume that
$\lim_k b_k^{(i,s,t)}$ exists for each $i,s,t$, and define $B^{(i,s,t)}$
to be this limit if it is distinct from $x$ and $y$. If the limit is
$x$ we modify $b_k^{(i,s,t)}$ by letting $B^{(i,s,t)}_k$ be the
unique point of distance $r$ from $x$ along the continuation of the
unique segment $\delta_{k}^{(i,s,t)}$
which connects $x$ and $b_k^{(i,s,t)}$ and which passes through
$\mathcal{U}_s$. Then we take $B^{(i,s,t)}$ to 
be the limit $\lim_k B_k^{(i,s,t)}$ (passing to subsequences if
necessary). We perform a similar modification if 
$\lim_k b_k^{(i,s,t)}=y.$ This procedure gives us a set 
$$\left\{B^{(i,s,t)}: i \in \{1, \ldots, n\},
  (s,t) \in \{1, \ldots, \tau+1\}^2 \right\},$$ 
which we claim is a blocking set for $x, y$. 

Indeed for each segment $\sigma$  from $x$ to $y$, we can assume that
it approaches $x$ from within $\mathcal{U}_s$ and $y$ from within
$\mathcal{U}'_t$. Then for large enough $k$ there are segments $\sigma_k$ from $x^{(s)}_k$
to $y^{(t)}_k$ which approach $\sigma$ pointwise. Working with these
segments as in the proof of (III), we see that $\sigma$ is blocked by
$B^{(i,s,t)}$ for some $i$.

{\em Case 3.} 
$x=y$ is a singular point. In this case we use both of the arguments
used in Cases 1 and 2. We leave the details to the reader. 
\ignore{
We now prove (I) and (IV). For (I), suppose $x, y \in M$ and there
are $x_k \to x, y_k \to y$ such that $(x_k, y_k) \in F_n(M)$. We need
to prove that $(x, y)  \in F_n(M)$. If $x,y$
are distinct nonsingular points, then for large enough $k$ the points
$x_k, y_k$ are also distinct and nonsingular, and the claim follows from (III), by
considering $(M, x_k, y_k)$ as a sequence in $\mathcal{F}_n$. So we
still have to prove (I) in three cases:
\begin{itemize}
\item
$x=y$ is a nonsingular point. 
\item
$ x \neq y$ and at
least one of them is singular. 
\item
$x=y$ is a singular point.
\end{itemize}
Similarly, consider statement (IV). Suppose $M_k \in \BB_n$ for
each $k$, $M_k \to M$, and let $x, y$ be two points on $M$. If $x$ and
$y$ are distinct nonsingular points on $M$ then we can choose distinct
nonsingular points $x_k, y_k$ on $M_k$ such that $x_k \to x, y_k \to
y$, in which case $(M_k,x_k, y_k) \to (M,x,y)$ as elements of $\cH''$,
and the claim follows from (III). So once again it remains to discuss
the three
cases in the above list. In each
of these cases, our proof 
is a modification of the proof of (III), and we indicate the required
changes retaining the same notation. 

{\bf THERE ARE SOME SMALL BUGS HERE. CAREFUL!}

Suppose first that
$x=y$ is nonsingular. The segment $\sigma$ from $x$ to $x$ is not
contained in the ball $B = B(x,r)$ appearing in the proof. The only place in the proof of (III) in which
we used that $x \neq y$ is that we needed to know that the subsegment
$\sigma'$ of $\sigma$ constructed in the proof is a proper subsegment
of $\sigma$. In the case $x=y$ there are two subsegments $\sigma'_k,
\sigma''_k$ between $x$ and $b^{(i)}_k$, neither of which is equal to
$\delta_{k,i}$. In particular each of them leaves the disk $B(x_k,r)$
and hence has length at least $r$. So in the limit they both converge
to nontrivial loops $\sigma', \sigma''$ from $x$ to itself, whose
concatenation is $\sigma$. This gives the desired contradiction to the
simplicity of $\sigma$.

If $x, y$ are distinct but one or both
are singularities, then we choose for $x_k, y_k$ distinct points,
where $x_k$ (resp. $y_k$) is a singularity if $x$ (resp. $y$) is a
singularity. Suppose for concreteness that $x$ is a singularity. 
In this case the set $B=B(x_k, r)$ is a topological disk which is
metrically a finite cover of a flat disk, branched over its center
point $x_k$. Then $B$ is star-shaped with respect to its center
point $x_k$ and it is still the case that there is a unique
straight segment 
from $x_k$ to any point in $B$ which is contained in $B$. We can thus
define the segment $\delta_{k,i}$ as in the proof of (III), and the same
argument applies. 

Finally when $x=y$ is a singularity follows we use
both of the above adaptations to the proof. This completes the proof
of (i) and (IV). 

To conclude the proof note that (I) $\implies$ (II).
}
\end{proof}
\begin{cor}\name{cor: closed sets}
Let $\BB_n$ denote the set of surfaces which have the bounded blocking
property, with blocking cardinality at most $n$. Then there is $\ell \in
\N$ such that if $M \in \BB_n$ then $\overline{GM}
\subset \BB_{\ell}$. 
\end{cor}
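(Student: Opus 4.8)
The plan is to deduce this directly from Lemma \ref{lem: closed sets}, with essentially no further input: the bounded blocking property, once one fixes the ambient stratum, is a closed, scale-invariant and $G$-invariant condition, and the quantitative ``Moreover'' clause of that lemma upgrades a dense family of blocked pairs to a global bound. Concretely, let $\cH$ be the stratum component containing $M$, so $\overline{GM} \subset \cH$, and let $\tau\pi$ denote the largest cone angle occurring on surfaces in $\cH$ — a number determined by the type $\vec{\alpha}$ of $\cH$. I would set $\ell \df n(\tau+1)^2$, which is exactly the bound that an inspection of the proof of Lemma \ref{lem: closed sets} (Cases 1--3 of the ``Moreover'' part) produces, and which depends only on $n$ and on $\cH$, not on the individual surface.

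First I would record that the set
$$\cF \df \{M_0 \in \cH : \bc(x,y) \le n \text{ for all } (x,y) \in \widehat{M_0^2}\}$$
is closed in $\cH$ — this is precisely Lemma \ref{lem: closed sets}(IV) — and is $G$-invariant, since each $g \in G$ acts in charts by an affine map of $\R^2$ and hence carries the straight-line trajectories on $M_0$ bijectively onto those on $gM_0$, preserving blocking cardinalities. Since $M \in \BB_n$, in particular $\bc(x,y) \le n$ for every pair of distinct nonsingular points of $M$, so $M \in \cF$; closedness and $G$-invariance then give $\overline{GM} \subset \cF$.

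Next I would take an arbitrary $M_0 \in \overline{GM} \subset \cF$. Then $\{(x,y) \in M_0^2 : \bc(x,y) \le n\}$ contains $\widehat{M_0^2}$, which is dense in $M_0^2$ (the complement of $\Sigma$ is dense, and so is the complement of the diagonal), so this set is dense in $M_0^2$. Applying the ``Moreover'' assertion of Lemma \ref{lem: closed sets} to $M_0$ — legitimate because $M_0$ lies in the same stratum and hence obeys the same cone-angle bound — I conclude that $M_0$ has the bounded blocking property with blocking cardinality at most $\ell = n(\tau+1)^2$, i.e.\ $M_0 \in \BB_\ell$. As $M_0$ was arbitrary, $\overline{GM} \subset \BB_\ell$.

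The only step needing genuine care is the \emph{uniformity} of $\ell$: the ``Moreover'' clause of Lemma \ref{lem: closed sets} is stated with an $\ell$ that a priori could vary with the surface, so I would need to point back to its proof and observe that the dependence enters only through the maximal cone angle, which is constant on a stratum. Everything else is a routine assembly of the parts of Lemma \ref{lem: closed sets}; in particular this corollary uses none of the deep results \ref{EsMiMo} or \ref{prop: Wright equivalences}, only the elementary fact that $GM$, hence $\overline{GM}$, stays inside $\cH$.
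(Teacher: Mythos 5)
Your proof is correct and follows essentially the same route as the paper's: closedness from Lemma \ref{lem: closed sets}(IV), $G$-invariance of the ``$n$-blocking for distinct nonsingular points'' condition, and then the ``Moreover'' clause of Lemma \ref{lem: closed sets} applied to each surface in the orbit closure. Your explicit remark that $\ell$ can be taken uniform because the maximal cone angle $\tau\pi$ is constant on the stratum is a point the paper leaves implicit, and it is a worthwhile clarification.
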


\begin{proof}
Let us say that $M_0$ is {\em $n$-blocking for distinct nonsingular
  points} if for any $(x,y) \in \widehat{M_0^2}$, $\bc(x,y) \leq
n$. Then the set of such surfaces is closed by Lemma
\ref{lem: closed sets}(IV). Also, if $M''\in \cH''$ has  $x,y$ as the first and second marked
points, and $x(g), y(g)$ are
the first and second marked points on $gM''$, then $\bc(x,y) \leq n$
implies $\bc(x(g), y(g))\leq n$. This implies that the property of
being $n$-blocking for distinct nonsingular points is
$G$-invariant. Since $M \in \BB_n$, $M$ is $n$-blocking for distinct
nonsingular points. Thus any surface in $\overline{GM}$ is also
$n$-blocking for distinct nonsingular points, and the claim follows
from the last assertion in Lemma \ref{lem: closed sets}. 
\end{proof}

A similar argument also shows:
\begin{prop}\name{prop: closed sets2}
Let $M$ be a translation surface, $\xi$ a singular point on $M$ and
$n \geq 0$ an integer. Recalling our convention that singularities on
translation surfaces are labeled, we can use the notation $\xi$ for a
singular point of any other surface in $\cH$. Let $\FF'_n \subset \cH'$ denote the set of
surfaces on which the marked point $y$ satisfies $\bc(\xi, y)
\leq n$. Then $\FF'_n$ is closed in $\cH'$. In particular, $\{y \in M
\sm \Sigma: \bc(\xi, y) \leq n\}$ is closed as a subset of $M \sm
\Sigma.$ 
\end{prop}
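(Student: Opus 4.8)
The plan is to transcribe the proof of part~(III) of Lemma~\ref{lem: closed sets}, incorporating the device used there to handle a singular endpoint (as in ``Case~2'' of the proof of its last assertion); the simplification is that here only one of the two relevant points is a singularity, namely $\xi$, and it is \emph{pinned} --- it is the same labelled singularity on every surface in $\cH'$ and does not move along a convergent sequence, so no covering of approach directions is needed and the blocking cardinality is preserved exactly. I would start with a sequence $(M_k, y_k) \to (M, y)$ in $\cH'$ with $\bc(\xi, y_k) \le n$ on $M_k$. Since the marked point of a surface in $\cH'$ lies off the singular set, $y$ and every $y_k$ are distinct from $\xi$. By the description of the topology of $\cH'$ (the one-marked-point analogue of the description of the topology of $\cH''$ recalled at the beginning of the proof of Lemma~\ref{lem: closed sets}), every trajectory from $\xi$ to $y$ on $M$ is a pointwise limit of trajectories from $\xi$ to $y_k$ on $M_k$. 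Hence, if infinitely many $M_k$ carry no trajectory from $\xi$ to $y_k$, then $M$ carries none and $\bc(\xi, y) = 0$; otherwise, discarding finitely many terms, every $M_k$ carries such a trajectory, and I choose a blocking set $\{b_k^{(1)}, \dots, b_k^{(n)}\}$ for $(\xi, y_k)$ on $M_k$ (enlarged to exactly $n$ points if necessary), passing to a subsequence so that $b_k^{(i)} \to b^{(i)} \in M$ for each $i$.

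The construction of a blocking set for $(\xi, y)$ on $M$ then proceeds exactly as in Lemma~\ref{lem: closed sets}(III). Fix $r > 0$ smaller than half the length of the shortest saddle connection on $M$ (with $y$ regarded as a marked point), so that $B(\xi, r)$ and $B(y, r)$ are disjoint and contain no singularity other than their centres. For $i$ with $b^{(i)} \notin \{\xi, y\}$ set $B^{(i)} \df b^{(i)}$. For $i$ with $b^{(i)} = y$, replace $b_k^{(i)}$ by the point at distance $r$ from $y_k$ along the unique trajectory from $y_k$ to $b_k^{(i)}$ inside the embedded flat disk $B(y_k, r)$, and take $B^{(i)}$ to be a subsequential limit, exactly as in the proof of~(III). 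For $i$ with $b^{(i)} = \xi$, I use that $B(\xi, r)$ is isometric to a truncated Euclidean cone with apex $\xi$, hence is star-shaped with respect to $\xi$: for large $k$ there is a unique straight (radial) segment $\delta_k^{(i)}$ inside $B(\xi, r)$ from $\xi$ to $b_k^{(i)}$ (note $b_k^{(i)} \ne \xi$ since a blocking set omits $\xi$), and I let $B_k^{(i)}$ be the point at distance $r$ from $\xi$ on the continuation of $\delta_k^{(i)}$, with $B^{(i)}$ a subsequential limit. Each $B^{(i)}$ produced by a modification lies at distance exactly $r$ from $\xi$ or from $y$, hence differs from both and from every other singularity.

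It then remains to check that $\{B^{(1)}, \dots, B^{(n)}\}$ blocks every trajectory $\sigma$ from $\xi$ to $y$ on $M$, which may be assumed simple. Let $\sigma_k$ be the trajectory from $\xi$ to $y_k$ on $M_k$ converging pointwise to $\sigma$. If $\sigma_k$ meets some $B_k^{(i)}$ for infinitely many $k$, then $B^{(i)} \in \sigma$ and we are done. Otherwise there are $i$ and infinitely many $k$ with $\sigma_k$ meeting $b_k^{(i)}$ but no $B_k^{(j)}$, so $b_k^{(i)}$ converges to $\xi$ or to $y$. If $b_k^{(i)} \to \xi$, the initial subsegment $\sigma'_k$ of $\sigma_k$ from $\xi$ to $b_k^{(i)}$ is not the radial segment $\delta_k^{(i)}$ (otherwise $B_k^{(i)}$ would lie on $\sigma_k$), hence exits $B(\xi, r)$ and has length at least $r$; its subsequential limit is therefore a nontrivial initial subsegment $\sigma'$ of $\sigma$ returning to $\xi$, and it is proper since $\lim b_k^{(i)} = \xi \ne y$, so $\sigma$ would either meet the singularity $\xi$ in its interior or visit $\xi$ twice --- contradicting that $\sigma$ is a simple trajectory. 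The case $b_k^{(i)} \to y$ is the same argument carried out near $y$, verbatim from the proof of~(III). This yields $\bc(\xi, y) \le n$, i.e.\ $(M, y) \in \FF'_n$, so $\FF'_n$ is closed in $\cH'$; intersecting with the fibre of the forgetful map $\cH' \to \cH$ over $M$ gives the ``in particular'' assertion.

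The only ingredient that is genuinely new relative to Lemma~\ref{lem: closed sets}(III) --- and hence the place to be careful --- is the local picture at the pinned cone point $\xi$: checking that the ``push out to radius $r$ along the unique straight segment'' construction makes sense at $\xi$ (star-shapedness of $B(\xi, r)$ with respect to its apex), and that a loop-like limit subsegment based at $\xi$ still contradicts simplicity of $\sigma$ (using that a trajectory contains no singularity in its interior). Both are routine once isolated; the remainder is a line-by-line copy of the proof of part~(III).
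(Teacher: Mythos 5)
Your proposal is correct and follows essentially the same route as the paper: the paper's proof also simply repeats the argument of Lemma~\ref{lem: closed sets}(III) with $x$ and $x_k$ replaced by $\xi$, and isolates exactly the same new ingredient, namely that $B(\xi,r)$ is a branched cover of a flat disk, star-shaped with respect to its apex, so that the unique radial segment $\delta_k^{(i)}$ and the ``push out to radius $r$'' modification still make sense. Your additional checks (that a blocking set omits $\xi$, and that the limiting loop at $\xi$ contradicts simplicity) are details the paper leaves implicit but are handled correctly.
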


\begin{proof}
We repeat the proof of Lemma \ref{lem: closed sets}(III),
replacing everywhere $x$ with $\xi$ and also $x_k$ with $\xi$.  

In this case the set $B=B(\xi, r)$ is a topological disk which is
metrically a finite cover of a flat disk, branched over its center
point $\xi$. Then $B$ is star-shaped with respect to its center
point $\xi$ and it is still the case that there is a unique
straight segment 
from $\xi$ to any point in $B$ which is contained in $B$. We can thus
define the segment $\delta_{k}^{(i)}$ as in the proof of (III), and the same
argument applies. 
\end{proof}

\section{Characterization of the finite blocking property}
In this section we will prove Theorem \ref{thm: BP}. 
A translation surface is \emph{purely periodic} if it is completely
periodic and all
cylinders in such a decomposition have commensurable circumferences. 
The following was proved in \cite{ThierryPP}:

\begin{prop}[Monteil]
\name{prop:bb-pp}
If $M$ has the blocking property then $M$ is purely periodic.
\end{prop}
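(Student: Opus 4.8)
The plan is to establish separately the two conditions making up pure periodicity: first, that every direction on $M$ is completely periodic (has no minimal component), and second, that in any completely periodic direction the cylinders of the decomposition have pairwise commensurable circumferences. In each case I argue by contradiction, using the blocking hypothesis in contrapositive form: I exhibit a single pair of non-singular points $(x,y)$ admitting infinitely many connecting geodesic segments which are sufficiently ``spread out'' that no finite subset of $M$ can meet all of them.

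\emph{Step 1: no minimal components.} Suppose some direction --- say the vertical --- has a minimal component $N$, a compact subsurface bounded by vertical saddle connections on which the vertical flow is minimal. Fix distinct non-singular points $x,y$ in the interior of $N$, on no vertical saddle connection, together with short horizontal transversals $I \ni x$ and $J \ni y$ inside $N$. Minimality gives that the forward vertical orbit of $x$ meets $J$ infinitely often in a set dense in $J$; writing $R_n \colon I_n \to J$ for the $n$-th return map, a local isometry with $x$ in its domain for every $n$, the points $R_n(x)$ are dense in $J$ and in particular accumulate on $y$. Now perturb the direction: for a small tilt $s$ the geodesic issuing from $x$ in the tilted direction has, for $|s|$ below a threshold, the same combinatorics as the vertical orbit segment through $x$ of ``return level'' $n$, and its $n$-th crossing point of $J$ depends continuously and, after shrinking, monotonically on $s$, sweeping out an open subarc of $J$; an intermediate-value argument then produces $s=s_n$ for which this geodesic passes exactly through $y$. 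This gives geodesics $g_n$ from $x$ to $y$ with directions tending to the vertical and lengths tending to infinity. The remaining --- and delicate --- point is that no finite $B\subset M$ meets every $g_n$: repeating the construction with $x$ and $y$ replaced by small neighborhoods yields, for each $n$, a positive-measure family (a ``beam'') of such connecting geodesics, and minimality of the flow on $N$ forces the traces of these beams on a fixed auxiliary transversal inside $N$ to equidistribute; hence a finite $B$ misses the level-$n$ connections for infinitely many $n$, contradicting the blocking property.

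\emph{Step 2: commensurable circumferences.} Suppose the vertical direction is completely periodic, $M=C_1\cup\dots\cup C_k$, and that two cylinders sharing part of their common boundary, say $C_1$ and $C_2$, have circumferences $c_1,c_2$ with $c_1/c_2\notin\Q$; by connectedness of $M$ and transitivity of commensurability it is enough to exclude this. Fix non-singular $x\in C_1$ and $y\in C_2$ away from the boundaries and consider geodesics from $x$ to $y$ with direction close to the vertical: such a geodesic winds $m_1$ times around $C_1$, crosses the shared boundary into $C_2$, and winds $m_2$ times before reaching $y$. Passage into $C_2$ is by a translation, so the direction is unchanged, and the condition to arrive at $y$ becomes a pair of congruences, one modulo $c_1$ and one modulo $c_2$, each an affine function of the tilt parameter with coefficients of order $m_1$ and $m_2$ respectively. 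Because $c_1/c_2\notin\Q$ these can be solved simultaneously for infinitely many pairs $(m_1,m_2)$ with tilts $s_n\to 0$, and --- again by irrationality of $c_1/c_2$ --- the corresponding exit points of $C_1$, equivalently the entry points into $C_2$, equidistribute along the relevant core circle. Enlarging $y$ to a neighborhood as before, one obtains beams of such geodesics whose traces on that circle equidistribute, so once more no finite set can block them all, contradicting the blocking property.

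The main obstacle in both steps is precisely the last one: upgrading ``there exist infinitely many connecting geodesics'' to ``no finite set blocks them''. As the case of a single cylinder already shows, infinitely many connecting geodesics can all funnel through two points, so one genuinely needs the equidistribution input --- minimality of the flow on the minimal component in Step 1, and irrationality of the circumference ratio in Step 2 --- to force the connecting geodesics apart. This is where the argument of \cite{ThierryPP} does its real work; the remaining ingredients (locating a minimal component or an incommensurable adjacent pair of cylinders, and choosing transversals that avoid singularities and the saddle connections of the relevant direction) are routine flat-geometry bookkeeping.
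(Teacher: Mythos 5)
The paper does not actually prove this proposition; it is quoted from \cite{ThierryPP}, so there is no internal proof to compare against. Judged on its own terms, your proposal has a fatal problem in Step 1, both in what it sets out to prove and in the key step of its argument. Complete periodicity, as defined here, does not say that no direction has a minimal component; it says that every direction which already contains a cylinder decomposes entirely into cylinders. Every translation surface has minimal directions (for the torus this is elementary; in general a.e.\ direction is uniquely ergodic by Kerckhoff--Masur--Smillie), so the statement you aim for in Step 1 is never true, and deriving it from the blocking property would show that no surface has the blocking property --- contradicting Lemma~\ref{torus-distinct}. That same lemma pinpoints where the argument breaks: run your Step 1 on the flat torus with ``vertical'' replaced by an irrational direction. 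Your construction produces geodesics $g_n$ from $x$ to $y$ with directions tending to that irrational direction and lengths tending to infinity, and your ``beam equidistribution'' step would conclude that no finite set meets all the $g_n$. But every geodesic from $x$ to $y$ on the torus passes through one of the four points of $m_2^{-1}(x+y)$. Equidistribution of the beams, or of their traces on a transversal, simply does not imply that the countable, measure-zero family of actual $x$-to-$y$ connections fails to be concentrated on a finite set; that implication is the entire content of the theorem, and it is precisely the step you defer to ``the argument of \cite{ThierryPP}''.

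Step 2 isolates the correct hypothesis (an adjacent incommensurable pair of cylinders) but has the same logical gap: density or equidistribution of the entry points into $C_2$ along the shared boundary only rules out blocking sets lying on that boundary circle. A putative blocking set may sit anywhere on $M$ --- in particular deep inside $C_1$, where the initial winding segments of all your connecting geodesics are subject to the same half-period concentration as on the torus --- and nothing in your argument excludes this. So in both steps the genuinely hard implication, from ``there are infinitely many spread-out connecting geodesics'' to ``no finite set meets all of them,'' is asserted rather than proved, and in Step 1 it is asserted in a setting where it is false. A correct proof must exploit, in an essential way, the coexistence in a single direction of a cylinder with a non-cylinder component (respectively, of two incommensurable cylinders), not merely minimality of a flow.
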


\begin{proof}[Proof of Theorem \ref{thm: BP}]
The implication \eqref{enum:torus-cover} $\implies$ \eqref{enum:blocking}
is proved in \cite{ThierryBP}, and it is immediate that
\eqref{enum:blocking} $\implies$ \eqref{enum:open-blocking}. 
We first show \eqref{enum:bounded-blocking} $\implies$
\eqref{enum:torus-cover}, that is we assume that $M$ has the bounded
blocking property and we show that it is a torus cover.

Let $\cL \df \overline{GM}$.
By assumption there is $n$ such that $M \in \BB_n$, and by Corollary
\ref{cor: closed sets} this means $\cL$ is contained in $\BB_{\ell}$
for some $\ell$. By
Proposition \ref{prop:bb-pp} this means that every surface in $\cL$
is completely periodic and by Proposition \ref{prop: Wright
  equivalences}, $\cL$ is of cylinder 
rank one. 

Recall that the {\em field of definition} of $\cL$ is the smallest
field such that in any coordinate chart $U$ on $\cH$ given by period
coordinates, the connected components of $U \cap \cL$ are cut out by linear
equations with coefficients in $k$ (see \cite{Wright_field}).  By
\cite[Theorem 1.9]{Wright}, for any
completely periodic surface $M'\in\cL$, and any cylinder decomposition
on $M'$ with circumferences $c_1, \dots, c_r$ the field of definition
$k$ of $\cL$ satisfies $$
k \subset \Q\left[
\left\{
\frac{c_i}{c_j}: i, j = 1, \ldots,
    r\right\}
\right].
$$ 
By Proposition \ref{prop:bb-pp}, any surface in $\cL$ is purely
periodic, so $k =
\Q$. Therefore $\cL$ contains a surface with rational holonomies,
i.e.\ a square-tiled surface $M'$. Since $M'$ 
is square-tiled the holonomy of
absolute periods on $M'$ is a discrete subset of $\C$. Motion in the
$\GREL$ leaf only changes the holonomy of absolute periods by a linear
map, and therefore for any $M$ in $\cL$, the holonomy of absolute
periods is discrete, i.e., any $M\in\cL$ is a torus cover. 
This proves \eqref{enum:bounded-blocking} $\implies$ \eqref{enum:torus-cover}.

Now we prove \eqref{enum:open-blocking} $\implies$ \eqref{enum:bounded-blocking}.
We have an open set $U_1$ in $M \times M$ consisting of pairs of
points on $M$ blocked from each other by finitely many points, that
is, $U_1 \cap \widehat{M^2} \subset \bigcup_n F_n(M)$. Each $F_n(M)$
is closed as a subset of $\widehat{M^2}$ by Lemma
\ref{lem: closed sets}(I), so
by Baire category, there is $n$ such that $F_n(M)$ contains an open
set $U_2$. Each pair of points $(x,y)$ in $U_2$ defines
a surface in $\cH''$, namely $M'' = (M,x,y)$. Let $\cL(M'')
\df \overline {GM''} \subset \cH''$. 
By Theorem \ref{EsMiMo}, 
$\widehat{\cL}(M'')$ is a linear manifold of even dimension contained in $\cF_n$ and the
collection of such linear submanifolds is countable. By Lemma
\ref{lem: closed sets}(III), $\widehat{\cL}(M'') \subset \cF_n$. 

The fiber $\phi^{-1}(M)$ is a linear submanifold of $\cH''$
identified with $\widehat{M^2}$. Therefore
for any $M''$, $\Omega(M'') \df \phi^{-1}(M) \cap \widehat{\cL}(M'')$ is also a
linear submanifold, and its dimension is $0,2$ or
$4$. We have covered $U_2$, an open subset of a four-dimensional manifold,
by countably many linear manifolds of dimensions at most four. By
Baire category, there is $M''$ for which $\Omega(M'')$ is a linear manifold of
dimension four. Since $\phi^{-1}(M)$ is connected, it coincides with
$\Omega(M'')$. 

We have proved that $$\phi^{-1}(M) = \Omega(M'')
\subset \widehat{\cL}(M'') \subset \mathcal{F}_n;$$
that is, any two distinct nonsingular points in $M$ are of blocking
cardinality at most $n$. Applying the last assertion of Lemma
\ref{lem: closed sets}, we see that $M$ has the bounded blocking property. 
\end{proof}
\section{Illumination}\name{section: illumination}
In this section we will study some illumination problems. 
Recall
that two points $x, y$ on a translation surface $M$ do
not illuminate each other if and only if 
they are
finitely blocked with blocking cardinality zero. Also recall that
$p_1, p_2$ denote the projections onto the first and second factors of
$M \times M$. The following result is the
main result of this section. 

\begin{thm}\name{thm: more general}
Let $M$ be a translation surface, let $n$ be a non-negative integer. 
Then:
\begin{enumerate}
\item[(i)]\name{item: 1}
For any $x \in M$, the set $
\{y \in M: \bc(x,y) \leq n\}$
is either finite or contains $M \sm (\Sigma \cup \{x\})$.  
\item[(ii)]\name{item: 2}
The set $\{(x,y) \in M^2: \bc(x,y) \leq n\}$ either contains
$\widehat{M^2}$, or is a finite union of 0 and 2
dimensional linear submanifolds of $M \times M$. The 2-dimensional linear
submanifolds are of one of the following forms: $F \times M$, $M
\times F$, where $F \subset M$ is finite, or 
a translation surface $S$ embedded affinely in $M \times M$,
where for $i=1,2$, $\tau_i = p_i|_S: S \to M$ is a finite-degree covering map
such that $\tau_2 \circ \tau_1^{-1}$ is a multiplication by a 
scalar $\lambda$ satisfying $\lambda^2 \in \Q$. 
\end{enumerate}
\end{thm}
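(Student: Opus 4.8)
The plan is to reduce Theorem~\ref{thm: more general} to an analysis of the orbit-closure of a suitable surface in $\cH''$, using the machinery of Section~3 together with Theorem~\ref{EsMiMo}. Fix $M$ and the integer $n$. For part (ii), consider the set $\mathcal{A} \df \{(x,y)\in\widehat{M^2}:\bc(x,y)\le n\}$; by Lemma~\ref{lem: closed sets}(I) this is closed in $\widehat{M^2}$, and it is identified with $\varphi^{-1}(M)\cap\mathcal{F}_n$ under the identification of the fiber $\varphi^{-1}(M)\subset\cH''$ with $\widehat{M^2}$. For each $(x,y)\in\mathcal{A}$ let $M''=(M,x,y)$ and $\mathcal{L}''=\overline{GM''}$; by Theorem~\ref{EsMiMo} $\widehat{\mathcal{L}''}$ is an affine invariant manifold, by Lemma~\ref{lem: closed sets}(III) it lies inside $\mathcal{F}_n$, and the collection of such manifolds is countable. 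Hence $\mathcal{A} = \bigcup_{(x,y)}\bigl(\varphi^{-1}(M)\cap\widehat{\mathcal{L}''}\bigr)$ is a countable union of affine submanifolds of the $4$-dimensional manifold $\widehat{M^2}$, each of dimension $0$, $2$, or $4$. If some piece has dimension $4$, then since $\varphi^{-1}(M)\cong\widehat{M^2}$ is connected that piece is all of $\widehat{M^2}$, giving the first alternative in (ii). Otherwise, by Baire category $\mathcal{A}$ cannot contain any open set; I will argue that in fact only finitely many of the $2$-dimensional pieces can occur (two $2$-planes in a $4$-manifold that agree on an open set agree, and distinct orbit-closures of a given dimension cannot accumulate by the last clause of Theorem~\ref{EsMiMo}, so an infinite family would force a higher-dimensional orbit-closure still inside $\mathcal{F}_n$ and eventually inside $\varphi^{-1}(M)$, contradicting the non-$4$-dimensional case), together with the closedness of $\mathcal{A}$ to rule out a $0$-dimensional part that is not finite. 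This yields the stated finite union.

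Next I would identify the $2$-dimensional pieces. A $2$-dimensional linear submanifold $N$ of $\widehat{M^2}$ on which $\bc\le n$ is, with respect to the Cartesian product translation structure on $M^2$, an affinely embedded $2$-dimensional linear submanifold; its two projections $p_1,p_2$ to $M$ are affine maps of translation surfaces. Each $p_i|_N$ is either constant (giving $F\times M$ or $M\times F$ with $F$ finite, after noting the other coordinate then ranges over a closed subsurface which must be all of $M$ up to the finite exceptional set), or a local homeomorphism, hence — since $M$ is compact and $N$ is a closed subset that is a surface — a finite-degree branched cover, which by the translation (rather than merely affine) structure has no branching, so is an honest finite covering map $\tau_i\colon S\to M$ where $S\df N$. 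The composition $\tau_2\circ\tau_1^{-1}$ is then a well-defined affine self-correspondence of $M$, so locally $z\mapsto \lambda z + c$; invariance of $S$ under the $G$-action (more precisely, $S$ arises inside a $G$-invariant set, and the derivative cocycle of $\tau_2\circ\tau_1^{-1}$ is $G$-equivariant) forces the linear part to commute with all of $\SL(2,\R)$, hence $\lambda\in\R$; and comparing areas, or using that $S$ carries a $G$-invariant measure pushing forward compatibly under both $\tau_i$, gives $\lambda^2\cdot\deg\tau_1 = \deg\tau_2 \in\Q$, so $\lambda^2\in\Q$. Part (i) follows from part (ii) by fixing $x$: the set $\{y:\bc(x,y)\le n\}$ is the slice $p_1^{-1}(x)\cap\mathcal{A}$, which meets each $2$-dimensional piece in a finite set unless that piece is $M\times F$ (impossible for a single slice unless it is everything) or is $\{x\}\times M$ or an $S$ projecting onto $M$ — but if a single horizontal slice is infinite, the slice lies in a piece of the form $F\times M$ with $x\in F$, i.e. $\{x\}\times M$, and then $\mathcal A\supset\{x\}\times(M\setminus(\Sigma\cup\{x\}))$.

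The main obstacle I expect is the passage from ``$\mathcal{A}$ is a countable union of affine submanifolds, none of dimension $4$'' to ``finitely many of them.'' The naive Baire-category argument only gives that $\mathcal{A}$ has empty interior; to upgrade to finiteness one must genuinely use the non-accumulation statement in Theorem~\ref{EsMiMo}: the $2$-dimensional pieces come from orbit-closures $\widehat{\mathcal{L}''}$ of bounded dimension, and if there were infinitely many distinct ones their accumulation set would be an affine invariant manifold $\mathcal{L}_\infty$ of strictly larger dimension, still contained in the closed $G$-invariant set $\mathcal{F}_n$, and whose intersection with $\varphi^{-1}(M)$ would be strictly larger-dimensional — iterating, one eventually lands in the excluded $4$-dimensional case. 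Making this induction precise, and simultaneously controlling the finitely many $0$-dimensional isolated pairs via the closedness of $\mathcal A$ in $\widehat{M^2}$ and the behaviour at the punctures $\Sigma$ and the diagonal, is where the real work lies; the identification of the covering structure on the $2$-dimensional pieces and the rationality of $\lambda^2$ are then comparatively routine consequences of $G$-equivariance and an area computation.
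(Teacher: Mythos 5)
Your strategy is essentially the paper's: identify the blocking set with $\varphi^{-1}(M)\cap\cF_n$, decompose it into countably many orbit-closures via Theorem \ref{EsMiMo}, use dimension counting and the non-accumulation clause to get finiteness, and then classify the $2$-dimensional pieces via the defining linear equation, $G$-equivariance and an area computation (your derivation of $\lambda\in\R$ and $\lambda^2=k_2/k_1\in\Q$ matches the paper's). However, there are two genuine gaps. The first is exactly the step you flag as the main obstacle and leave unresolved: knowing that infinitely many distinct $2$-dimensional pieces would force an accumulation orbit-closure $\cL_\infty\subset\cF_n$ of \emph{larger dimension in $\cH''$} does not by itself imply that $\cL_\infty\cap\varphi^{-1}(M)$ has larger dimension. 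The extra dimensions of $\cL_\infty$ could a priori be transverse to the fiber, i.e.\ $\cL_\infty$ could spread out over a larger piece of the base while still meeting $\varphi^{-1}(M)$ in dimension $2$. The paper closes this with Lemma \ref{lem: projection closed}: for every marked surface $M''$ over $M$, the projection $\varphi|_{\overline{GM''}}$ is an \emph{open} map, so $\dim\varphi(\overline{GM''})=\dim\overline{GM}$; the base direction is therefore already saturated by each individual piece, and any dimension increase of an accumulating orbit-closure must occur along the fiber over $M$, landing you in the excluded $4$-dimensional case. Without this openness statement (whose proof requires the equidistribution results of \cite{EsMiMo}, since the fiber of $\varphi$ is noncompact) your induction does not terminate.

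The second gap is in your deduction of (i) from (ii) by slicing: statement (ii) lives on $\widehat{M^2}\subset(M\sm\Sigma)^2$, so the slice $p_1^{-1}(x)\cap\mathcal A$ is empty when $x$ is a singularity, whereas (i) is asserted for \emph{all} $x\in M$ (and the singular case is needed, e.g., for Corollary \ref{cor: billiard illumination}, where the relevant points include cone points of the unfolded surface). The paper treats singular $x$ by a separate, parallel argument in the one-marked-point stratum $\cH'$, replacing Lemma \ref{lem: closed sets}(III) by Proposition \ref{prop: closed sets2} (closedness of $\{y:\bc(\xi,y)\le n\}$ for a labelled singularity $\xi$) and $\varphi''$ by $\varphi'$. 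You would need to add this case, or some substitute for it, to complete part (i).
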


\begin{proof}[Theorem \ref{thm: more general} implies Theorem \ref{thm:
  illumination}]
We apply Theorem \ref{thm: more general} with $n=0$. It is clear that
the second alternative in (i) 
cannot hold, since for any
$x$, all nearby points 
illuminate $x$. Also, in (ii), 
the cases $F \times M$
and $M \times F$ do not arise, since any point illuminates some other
point. 
\end{proof}

\begin{proof}[Proof of Theorem \ref{thm: more general}]
Keep the notation of \S \ref{subsec: adding points} and Lemma
\ref{lem: closed sets}. We will first prove (i) 
in case $x$ is a
regular point of $M$. Let $M' \in \varphi'^{-1}(M) \subset \cH'$
denote the surface with first marked point at $x$. 
We need to show that 
$$A \df \left\{y \in M \sm (\Sigma \cup \{x\}): \bc(x,y) \leq n \right\},$$
which we may identify with 
$\cF_n \cap \varphi''^{-1}(M')$, is either finite or coincides with
$\varphi''^{-1}(M')$.  
Let us assume $A \varsubsetneq \varphi''^{-1}(M')$. Since $\cF_n$ is closed and
$G$-invariant, $A$  is 
a union of at most countably many linear manifolds, which are of the
form $\cL(M''_0) \df \overline{GM''_0}$ for $M''_0 \in A$. For each
$M''_0$, $\cL(M''_0) \cap \varphi''^{-1}(M')$ is a linear manifold of
dimension 0 or 2 by Theorem \ref{EsMiMo}. If the dimension were 2, 
$A$ would coincide with the fiber $\varphi''^{-1}(M)$ by
connectedness, and hence each $\cL(M''_0)$ is finite. To conclude the
proof of (i) 
we need to show that in fact there are only
finitely many distinct sets $\cL(M''_0)$. If there were
infinitely many this would mean that
$\cF_n$ contains infinitely many $G$-orbit-closures, so by Theorem
\ref{EsMiMo} they would accumulate on an orbit-closure of greater
dimension, also contained in $\cF_n$. By Lemma \ref{lem: projection closed}, each
$\cL(M''_0)$ projects onto $\cL$, so the only way for the dimension to
increase would be in the direction of the fiber $\varphi''^{-1}(M')$;
that is, once again we would have that $A = \varphi''^{-1}(M')$,
contrary to assumption. 

In case $x = \xi$ is a singularity we repeat the argument, using Proposition
\ref{prop: closed sets2} instead of Lemma \ref{lem: closed sets},
$\cF'_n$ instead of $\cF_n$, $\varphi'$ instead of $\varphi''$ and
$\cH'$ instead of $\cH''$. We leave the details to the reader.  

The proof of (ii) is similar. Suppose that 
$$\widehat{M^2} \not \subset A \df  \{(x, y) \in M^2: \bc(x,y) \leq n\}.$$
Applying Theorem \ref{EsMiMo} as in the preceding paragraph
we see that $A$ is the union of finitely many 0-dimensional and
finitely many 2-dimensional linear manifolds. We need 
to show that all of the 2-dimensional manifolds have the stated
form. This follows from arguments of \cite{HST}, but we give an
independent argument for completeness. 

Let $N \subset (M \sm \Sigma)^2$ be a
2-dimensional linear manifold in $A$. By
Theorem \ref{EsMiMo}, $N$ is $\C$-linear, i.e., in the translation charts of $M \times
M$, a neighborhood of $N$ is the set of solutions
of an equation of the  form 
\eq{eq: equation of plane}{
a z_1+bz_2 =0
}
(up to a translation). Moreover $N$ is
defined over $\R$ so we can take $a, b \in \R$. If $a=0$ then any
connected component of $N$ is of the form $M \times \{x\}$ for some $x
\in M$, i.e. $N$ is of the form $M \times F$. Similarly if $b=0$ then
$N$ has the form $F \times N$. Now we consider the case when $a,b$ are
both nonzero.

Since the transition maps
for the translation atlas are translations, $a$ and $b$ can actually
be taken to be independent of the neighborhood, and the Cartesian product translation
structure on $M^2$, restricted to $N$, endows $N$ with a natural
structure of a translation surface (see \cite[\S3]{HST} for more
details), where $N$ is locally modelled on the plane \equ{eq: equation
  of plane}. Since $a$ and $b$ are both nonzero, each of the
projections $\tau_i = p_i|_N$ has a nonsingular derivative, so by
connectedness, each $\tau_i$ is a finite covering map. 

The plane
\equ{eq: equation of plane} can be identified with $\C$ in many ways
and thus the translation surface structure on $N$ is only naturally defined up to
a scalar multiple. 
However, for any fixed choice of translation structure on $N$, each of
the maps $\tau_i$ is the composition of a dilation and a translation
covering. Let $k_i$ be the degree of the covering map $\tau_i$, and
let $\lambda_i$ be the associated dilation. The choice of the
$\lambda_i$ depends on a choice of the translation structure on $N$,
but since the derivative of $\tau_2 \circ \tau_1^{-1} $ is the map $z_1
\mapsto -\frac{a}{b}z_1$, we have $\lambda \df \frac{\lambda_2}{\lambda_1} =
-\frac{a}{b}$. We can compute the area of $N$ using each of the
maps $\tau_i$, to obtain 
$$
\mathrm{area} (N) = \frac{ k_i}{\lambda_i^2} \mathrm{area}(M). 
$$ 
Comparing these formulae for $i=1,2$ we see that $\lambda^2 = \left(\frac{a}{b}
\right)^2 = \frac{k_2}{k_1} \in \Q$. 
\end{proof}

\section{Examples and questions}\name{section: examples} 
Let $T$ be the standard torus, obtained from the unit square $[0,1]^2$
by gluing opposite sides to each other by translations. 
%
%
Denote by $\pi$ the projection from $\R^2$ to $T$.
For any nonzero integer $n$, notice that the map $\R^2 \to \R^2$,
$x\mapsto n x$ descends to a map $m_n: T \to T$ which multiplies
both components by $n$ in $\R/\Z$, and is therefore $n^2$ to $1$.
We describe blocking cardinalities of pairs of points in $T$ and
blocking sets realizing them.

\begin{lem}
\name{torus-distinct}

\begin{itemize}
\item[(a)]
If $x$ and $y$ are distinct points on $T$, their
blocking cardinality is $\bc(x,y) = 4$.
\item[(b)]
It is realized by the blocking set $B(x,y) = m_2^{-1}(x+y)$, which contains
the midpoint of any geodesic from $x$ to $y$.
\item[(c)]
This is the unique blocking set of size $4$.
\end{itemize}

\end{lem}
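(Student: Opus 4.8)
The plan is to normalize and then dispose of the three items in turn. Postcomposing with a translation of $T$ (an isometry) lets us assume $x=0$; write $y=p$ and let $\tilde p=(p_1,p_2)\in[0,1)^2\smallsetminus\{0\}$ be the lift of $p$ in the fundamental square. The geodesic segments from $x$ to $y$ are exactly the projections $\gamma_v$ to $T$ of the straight segments $[0,\tilde p+v]$, $v\in\Z^2$, and the midpoint of $\gamma_v$ is the image in $T$ of $(\tilde p+v)/2$. Since $2\cdot(\tilde p+v)/2\equiv\tilde p\equiv x+y$ in $T$, this midpoint lies in $m_2^{-1}(x+y)$; and $v\mapsto(\text{midpoint of }\gamma_v)$ factors through $\Z^2/2\Z^2$ injectively, hence lands onto all four points of $m_2^{-1}(x+y)$. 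As $2x=0\neq\tilde p$ and $2y=2\tilde p\neq\tilde p$ (because $\tilde p\neq0$), the set $B(x,y):=m_2^{-1}(x+y)$ avoids $x$ and $y$, has exactly four points, and contains the midpoint of every $\gamma_v$. This proves (b) and gives $\bc(x,y)\le4$.

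For the lower bound I would exhibit four geodesics with pairwise disjoint interiors. For each class $c\in\{0,1\}^2$ take $v_c\in\{0,-1\}^2$ with $(v_c)_j$ equal to $0$ or $-1$ according as $c_j=0$ or $1$, so the four lifts $\tilde p+v_c$ are $\tilde p$, $\tilde p-(1,0)$, $\tilde p-(0,1)$, $\tilde p-(1,1)$ — one in the closure of each quadrant, each of length $<\sqrt2$ — and their midpoints with $0$ project to the four points of $m_2^{-1}(x+y)$. A short computation, distinguishing the case $p_1p_2\neq0$ (where the interiors of the $\sigma_c:=\gamma_{v_c}$ land in the four open rectangles cut out of $(0,1)^2$ by the lines $\{x=p_1\}$ and $\{y=p_2\}$) from the degenerate case $p_1p_2=0$ (handled the same way, or by symmetry), shows that the $\sigma_c$ meet pairwise only in $\{x,y\}$. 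Since a blocking set contains neither $x$ nor $y$, it must meet each $\sigma_c$, in four distinct points; hence $\bc(x,y)=4$.

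For (c), let $B$ be a blocking set with $|B|=4$. By the previous step and the disjointness of the $\sigma_c$, $B$ meets each $\sigma_c$ in exactly one point $b_c$, and it remains to prove $b_c=m_c$, the midpoint of $\sigma_c$. The key tool is the affine involution $\iota(z)=(x+y)-z$: it is an isometry of $T$, it exchanges $x$ and $y$, its fixed-point set is exactly $m_2^{-1}(x+y)$, and — because every geodesic from $x$ to $y$ has its midpoint in this set — $\iota$ maps each such geodesic onto itself. Hence $b$ and $\iota(b)$ block exactly the same geodesics. If some $b_{c_0}\neq m_{c_0}$ then $\iota(b_{c_0})\neq b_{c_0}$, and $\iota(b_{c_0})\notin B$ (it lies on $\sigma_{c_0}$, where $b_{c_0}$ is the only point of $B$); also $m_{c_0}\notin B$. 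Now all of the infinitely many geodesics with midpoint $m_{c_0}$ pass through $m_{c_0}\notin B$, so each is blocked by one of $b_{c_0},b_{c_1},b_{c_2},b_{c_3}$. Analysing, via the lattice cosets $\tilde b_{c_k}+\Z^2$ exactly as in (b), which geodesics of midpoint $m_{c_0}$ a fixed off-midpoint point can block — namely only those whose direction is aligned with one of its lifts — one uses the fact that for $B=m_2^{-1}(x+y)$ the four cosets account for the directions of $\tilde p+\Z^2$ with no redundancy, whereas four arbitrary points with $b_{c_0}\neq m_{c_0}$ cannot, to produce a geodesic with midpoint $m_{c_0}$ missing all four points of $B$ — a contradiction. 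Therefore $B=m_2^{-1}(x+y)$.

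The main obstacle is precisely this last step of (c): the midpoint $m_c$ blocks every geodesic of midpoint $m_c$ "for free", and one must show, sharply, that no four off-midpoint points can substitute for it — equivalently, one must exhibit an explicit unblocked geodesic — which is the real content of the uniqueness statement. By comparison the pairwise-disjointness verification in the lower bound is elementary, but still deserves to be carried out case by case.
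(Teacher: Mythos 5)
Your treatment of (a) and (b) is correct and is essentially the paper's argument: the upper bound comes from the fact that the midpoint of any lifted segment projects into $m_2^{-1}(x+y)$, and the lower bound from the four lifts $\tilde p+v_c$, $v_c\in\{0,-1\}^2$, whose projections have pairwise disjoint interiors (the paper uses exactly these four segments, written as the segments from $\tilde y$ to $\tilde x$, $\tilde x+(1,0)$, $\tilde x+(0,1)$, $\tilde x+(1,1)$). The degenerate case $p_1p_2=0$ is dispatched a bit breezily, but that is a minor matter.

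The genuine gap is in (c), and you have flagged it yourself. After reducing to showing $b_{c_0}=m_{c_0}$, you say one should ``analyse which geodesics of midpoint $m_{c_0}$ a fixed off-midpoint point can block'' and thereby ``produce a geodesic with midpoint $m_{c_0}$ missing all four points of $B$'' --- but this analysis is never carried out, and the involution $\iota(z)=(x+y)-z$ does not by itself produce such a geodesic: the observation that $b$ and $\iota(b)$ block the same geodesics is compatible, as far as it goes, with a non-symmetric $4$-point set still being blocking. Since you yourself call this step ``the real content of the uniqueness statement,'' the proposal as written does not prove (c). The paper closes the gap with a single explicit auxiliary geodesic per midpoint: the projection of the segment from $\tilde y+(0,1)$ to $\tilde x+(1,0)$ meets the interiors of the four standard segments in exactly one point, namely the midpoint $\tfrac{1}{2}(x+y+(1,1))\in m_2^{-1}(x+y)$. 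A $4$-element blocking set necessarily consists of one point in each of those four interiors, so the point of $B$ blocking this fifth geodesic must be that midpoint; repeating with the three analogous fifth segments forces $B=m_2^{-1}(x+y)$. You should either exhibit such explicit fifth segments (and verify the single-intersection claim), or actually complete the coset analysis you only sketch.
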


\begin{proof}
Let $\til x$, $\til y$ denote points
in $\R^2$ which project to $x$, $y$ on $T$. Let $u = (1,0)$,
$v = (0,1)$, $w = (1,1)$. The four segments from $\til y$ to the four
points 
$\til x$, $\til x+u$, $\til x+v$, $\til x+w$ (four corners of a unit square)
project to segments with disjoint interiors on $T$, so at least $4$ points 
are required to block the pair $(x,y)$. 
On the other hand, any line segment in $T$ from $x$ to $y$
is the projection of a line segment  in $\R^2$ from $\til x$ to $\til y + a u + b v$
with $a$ and $b$ in $\Z$. Such a segment has midpoint
$\frac{1}{2}(\til x + \til y + a u + b v)$. 
This midpoint in $\R^2$ projects to one of the points $\frac{1}{2}(x+y)$,
$\frac{1}{2}(x+y+u)$, $\frac{1}{2}(x+y+v)$, $\frac{1}{2}(x+y+w)$,
which are the four points in $T$ comprising $m_2^{-1}(x+y)$. This
proves that the set $B(x,y)$ is a blocking 
set and that $\bc(x,y) \leq 4$. So (a) and (b) are proved. 

We now prove (c).
We saw that the four segments from $\til y$ to $\til x$, $\til x+u$,
$\til x+v$, 
$\til x+w$ project to segments on $T$ with disjoint interiors, so a blocking 
set for $(x,y)$ must contain at least a point in each of them.
Consider the segment from $\til y+v$ to $\til x+u$. The only
intersection of its projection to $T$ with the interiors of our four segments
is its midpoint $m$ which is also the midpoint of the segment from $y$ to 
$y+w$. So a blocking set not containing $m$ would need to contain 
at least $5$ points. Similar reasoning proves the other three points in 
the proposed set $B(x,y)$ have to be in a blocking set of cardinality $4$.
\end{proof}

The following two lemmas extend this description to configurations
blocking a point from itself, and describe larger blocking sets on
$T$. They are proved by similar arguments and we leave the details to
the reader. 

\begin{lem}
\name{torus-same}
\begin{itemize}
\item[(a)]
If $x = y$, then the blocking cardinality is $\bc(x,x) = 3$.
\item[(b)]
It is realized by the blocking set $B(x,x) = m_2^{-1}(2 x) \sm \{x\}$,
which is the set of midpoints of all primitive geodesics from $x$ to
$x$. This blocking set  
can also be described as $B(x,x) = x + B_0$ where $B_0 = B(0,0) = 
m_2^{-1}(0)\sm \{0\}$.
\item[(c)]
This is the unique blocking set of size $3$.
\end{itemize}
\end{lem}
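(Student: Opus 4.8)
The plan is to run the same argument as in the proof of Lemma~\ref{torus-distinct}, now for loops based at a single point. Fix a lift $\til x \in \R^2$ with $\pi(\til x) = x$. Every nonconstant geodesic loop on $T$ based at $x$ is the image under $\pi$ of a straight segment from $\til x$ to $\til x + (a,b)$ with $(a,b) \in \Z^2 \nz$, and its midpoint on $T$ is $\pi\!\left(\til x + \tfrac12(a,b)\right)$, which therefore lies in the four-point set $m_2^{-1}(2x) = \{x\}\cup(x+B_0)$, where $B_0 = \{(\tfrac12,0),(0,\tfrac12),(\tfrac12,\tfrac12)\}$.

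To get $\bc(x,x)\le 3$ together with assertion~(b): write $(a,b)=d(a',b')$ with $d=\gcd(a,b)$ and $\gcd(a',b')=1$, so that the loop is the $d$-fold concatenation of the primitive loop in direction $(a',b')$. The first sub-loop has midpoint $\pi\!\left(\til x+\tfrac12(a',b')\right)$, attained at parameter $1/(2d)\in(0,1)$, hence in the interior of the big loop; since $a',b'$ are not both even this midpoint lies in $x+B_0$ and is $\neq x$. Thus $B(x,x)\df x+B_0 = m_2^{-1}(2x)\sm\{x\}$ is a blocking set of cardinality $3$, and taking $d=1$ shows it is exactly the set of midpoints of primitive loops at $x$, which proves~(b). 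For the matching lower bound, the three primitive loops at $x$ in directions $(1,0)$, $(0,1)$, $(1,1)$ project to loops with interiors $\{x+(s,0)\}$, $\{x+(0,s)\}$, $\{x+(s,s)\}$, $s\in(0,1)$, which are pairwise disjoint in $T$; so any blocking set meets each of them and therefore has at least $3$ points. This establishes~(a).

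For the uniqueness statement~(c), let $B$ be a blocking set with $|B|=3$. By the disjointness of the three basic interiors, $B$ has exactly one point on each of them, say $b_h$, $b_v$, $b_d$ on the horizontal, vertical and diagonal loop respectively. I then bring in three further primitive loops, in directions $(1,-1)$, $(1,2)$, $(2,1)$, chosen so that each one meets the union of the three basic interiors in a \emph{single} point: the $(1,-1)$-loop meets only the diagonal one, at $x+(\tfrac12,\tfrac12)$; the $(1,2)$-loop meets only the horizontal one, at $x+(\tfrac12,0)$; the $(2,1)$-loop meets only the vertical one, at $x+(0,\tfrac12)$. As in Lemma~\ref{torus-distinct}(c), $B$ must contain a point of each of these auxiliary loops, and its only candidate for that is the point it already shares with the corresponding basic loop; this forces $b_d=x+(\tfrac12,\tfrac12)$, $b_h=x+(\tfrac12,0)$, $b_v=x+(0,\tfrac12)$, i.e.\ $B=B(x,x)$.

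The computations involved --- solving $t(a,b)\equiv p \pmod{\Z^2}$ to locate intersection points and checking they are interior and $\neq x$ --- are routine, and I expect no real difficulty there. The step that needs genuine care is the selection of the auxiliary directions in~(c): one must verify that each chosen loop meets the union of the three basic interiors in \emph{exactly} one point and that this point sits on the intended basic loop, so that the counting ("fifth point") argument of Lemma~\ref{torus-distinct}(c) applies verbatim. I will also record the minor point that a loop with $(a,b)\in 2\Z^2$ passes through $x$ in its interior --- so it cannot be blocked \emph{at} $x$ --- but being an even concatenation of primitive loops it still passes through a point of $x+B_0$, so $B(x,x)$ does block it.
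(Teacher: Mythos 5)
Your proof is correct and is precisely the adaptation of the argument for Lemma~\ref{torus-distinct} that the paper has in mind (the paper omits the details, stating only that the lemma is "proved by similar arguments"): the three disjoint primitive loops in directions $(1,0)$, $(0,1)$, $(1,1)$ replace the four disjoint segments there, and your auxiliary loop in direction $(1,-1)$ is the analogue of the segment from $\til y+v$ to $\til x+u$ used for uniqueness. The points you flag for care --- that each auxiliary loop meets the union of the three basic interiors in exactly one point, and that non-primitive loops are still blocked at the midpoint of their first primitive sub-loop --- check out as you describe.
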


\begin{lem}
\name{torus-distinct-more}
\begin{itemize}
\item[(a)]
Let $n$ and $a$ be relatively prime integers with $1 \leq a < n$.
For any pair of points $(x,y)$ with $x \neq y$, 
the set $B = m_n^{-1}(a x + (n-a) y)$ is a blocking set of cardinality $n^2$ 
for the pair $(x,y)$. It contains the point located $a/n$ of the way along each 
line segment from $x$ to $y$ on $T$.
\item[(b)]
Let $n \geq 2$ be an integer. For the pair of points
$(x,x)$ with $x = 0$, the set 
$$B_0 = m_n^{-1}(0)\sm\{0\} = \{ (a/n,b/n) : 
0 \leq a < n,\quad 0 \leq b < n,\quad (a,b) \neq (0,0) \}$$
is a blocking set of cardinality $n^2 - 1$. 

For the pair of points 
$(x,x)$ with $x \neq 0$, the set $B = x + B_0$
is a blocking set of cardinality $n^2 - 1$, also equal to
$m_n^{-1}(n x)$.
\end{itemize}
\end{lem}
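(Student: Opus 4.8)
The plan is to lift everything to $\R^2$ and argue exactly as in the proof of Lemma~\ref{torus-distinct}. Since $m_n$ is an $n^2$-to-one covering, $m_n^{-1}$ of any point of $T$ has exactly $n^2$ elements; this immediately gives $|B|=n^2$ in (a), and $|B_0|=n^2-1$ and $|x+B_0|=n^2-1$ in (b) (a translate being a bijection). So the only real content is, for each candidate set: (1) it meets every straight-line trajectory between the two prescribed points, and (2) it avoids those two points. Fix once and for all lifts $\tilde x,\tilde y\in\R^2$ with $\pi(\tilde x)=x$, $\pi(\tilde y)=y$; then every geodesic from $x$ to $y$ on $T$ is the projection of a Euclidean segment $[\tilde x,\tilde y+k]$ with $k\in\Z^2$, and when $x=y$ one has $k\neq 0$ (the constant path being excluded).

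For part (a), on the segment $[\tilde x,\tilde y+k]$ I would take the point $P_k\df\tfrac1n\bigl(a\tilde x+(n-a)\tilde y+(n-a)k\bigr)$, which lies on it since $1\le a<n$ forces the parameter $(n-a)/n$ into $(0,1)$ (for $n=2$, $a=1$ this is the midpoint, recovering Lemma~\ref{torus-distinct}). Applying $m_n$ multiplies coordinates by $n$, so $m_n(\pi(P_k))=ax+(n-a)y+(n-a)k\equiv ax+(n-a)y\pmod{\Z^2}$ because $(n-a)k\in\Z^2$; hence $\pi(P_k)\in m_n^{-1}(ax+(n-a)y)=B$, and it lies on the trajectory, so $B$ blocks $(x,y)$. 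That $x,y\notin B$ reduces to $(n-a)(x-y)\neq 0$ and $a(x-y)\neq 0$ in $T$, which holds unless $x-y$ is a torsion point of order dividing $n-a$ or $a$; in that remaining (lower-dimensional) family of pairs one deletes the offending point from $B$ and the same computation shows the smaller set still blocks — a routine check I would omit, as the statement does.

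For part (b) with $x=0$, a loop at $0$ lifts to $[0,k]$ with $k\in\Z^2\setminus\{0\}$; write $k=dk'$ with $k'$ primitive and $d\ge 1$. The point $\pi(k'/n)$ lies on this segment (at parameter $1/(dn)\in(0,1)$), and $m_n(\pi(k'/n))=\pi(k')=0$ in $T$, so $\pi(k'/n)\in m_n^{-1}(0)$; moreover $\pi(k'/n)\neq 0$ because $k'/n\notin\Z^2$ — a primitive vector cannot have both coordinates divisible by $n\ge 2$. Hence $\pi(k'/n)\in B_0$, so $B_0$ blocks $(0,0)$, and $0\notin B_0$ by construction. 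For general $x$, translating by $\tilde x$ shows $\pi(\tilde x+k'/n)=x+\pi(k'/n)\in x+B_0$ lies on the loop $[\tilde x,\tilde x+k]$, while $x\notin x+B_0$ since $0\notin B_0$; and $m_n^{-1}(nx)=\{z\in T:n(z-x)=0\}=x+m_n^{-1}(0)=(x+B_0)\cup\{x\}$, so $x+B_0=m_n^{-1}(nx)\setminus\{x\}$ has cardinality $n^2-1$.

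The one genuine subtlety — the hard part — is making sure the chosen blocking point never coincides with an endpoint: for a non-primitive loop $[\tilde x,\tilde x+k]$ the naive choice $\pi(k/n)$ can equal $x$, which is exactly why part (b) passes to the primitive sub-vector $k'$; and in part (a) the collision occurs precisely for the torsion pairs flagged above. Apart from this case-bookkeeping, everything is the same one-line planar computation as in Lemma~\ref{torus-distinct}, which is why — like the authors of the statement — I would leave the remaining details to the reader.
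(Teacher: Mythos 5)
Your proof is correct and is exactly the argument the paper intends: the paper gives no proof of this lemma beyond the remark that it follows ``by similar arguments'' from Lemma \ref{torus-distinct}, and your lift-to-$\R^2$ computation --- placing the blocking point at the appropriate rational parameter on each lifted segment and, in (b), passing to the primitive vector $k'$ to avoid the endpoint on non-primitive loops --- is precisely that argument. Your side observations are also accurate: the statement's pairing of $B=m_n^{-1}(ax+(n-a)y)$ with ``the point $a/n$ of the way'' has $a$ and $n-a$ interchanged (the application in Proposition \ref{torus-a-b} uses the pairing as you have it), the set in (b) is $m_n^{-1}(nx)\sm\{x\}$ rather than $m_n^{-1}(nx)$, and for torsion pairs in (a) the full preimage can contain $x$ or $y$ and must be trimmed as you indicate.
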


We will use these computations to compute blocking configurations on
brached covers of $T$. Recall that if $M \to T$ is a branched
translation cover, a singularity of $M$ corresponds to a ramification
point of the cover, and if the angle at a singularity $x$ is $2\pi k$ then
$k$ is called the {\em ramification index} of $x$. 

\begin{lem}
\name{degree-d-torus-covering}
Suppose $M$ is a torus cover of degree $d$, with arbitrary branch locus and 
ramification type, and let $p: M \to T$ denote the covering map. 

\begin{itemize}
\item[(a)]
For a pair $(x,y)$ of points of $M$ such that $p(x) \neq p(y)$, 
if $B'$ is a blocking set for $(p(x),p(y))$ on $T$, then 
$B = p^{-1}(B')$ is a blocking set for $(x,y)$, of cardinality
at most $d$ times that of $B'$, with equality when $B$ contains no 
zero of $M$, i.e.\ no ramification point of $p$.
\item[(b)]
In particular,
\begin{itemize}
\item
for almost every pair $(x,y)$ of points of $M$, $\bc(x,y) \leq 4 d$.
\item
for pairs $(x,y)$ of points of $M$, such that the set $B(p(x),p(y))$
contains branch points of $p$, the bound above is decreased by the sum 
of the ramification indices of the ramification points above these branch 
points.
\end{itemize}
\item[(c)]
For a pair of points $(x,y)$ on $M$ such that $p(x) = p(y)$ (whether 
$x=y$ or not), $p^{-1}(B(p(x),p(x)))$ is a blocking set, so that 
$\bc(x,y) \leq 3 d$.
As above, when $B(p(x),p(y))$ contains branch points of $p$, the 
bound is decreased by the sum  of the ramification indices of the
ramification points above these branch points.
\end{itemize}

\end{lem}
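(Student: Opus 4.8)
The plan is to exploit that a torus cover $p\colon M\to T$ is a translation map, hence a local isometry sending straight-line trajectories on $M$ to straight-line trajectories on $T$, and that it is surjective of degree $d$. For (a), assume $p(x)\neq p(y)$, let $B'$ be any blocking set for $(p(x),p(y))$ on $T$, and set $B=p^{-1}(B')$. First I would observe that $B$ omits $x$ and $y$: since $B'$ is a blocking set, $p(x),p(y)\notin B'$, hence $x,y\notin p^{-1}(B')$. Next, given any straight-line trajectory $\gamma\colon[0,1]\to M$ from $x$ to $y$ — a constant-speed geodesic segment with no singularity of $M$ in its interior — the composition $p\circ\gamma$ is a straight-line trajectory from $p(x)$ to $p(y)$ on $T$, and nothing has to be checked about its interior because $T$ has no singularities. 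So $p\circ\gamma$ meets $B'$, say $p(\gamma(t))=b'\in B'$; then $\gamma(t)\in p^{-1}(b')\subset B$, and $\gamma(t)\notin\{x,y\}$ since $b'\neq p(x),p(y)$. Hence $B$ blocks $(x,y)$. The cardinality claim is then the elementary remark that the fibers over the distinct points of $B'$ are disjoint, each has at most $d$ points, and a fiber has exactly $d$ points precisely when its base point is not a branch value of $p$; so $|B|\le d\,|B'|$, with equality exactly when $p^{-1}(B')$ contains no ramification point of $p$.

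For (b), the first assertion is immediate from (a) and Lemma~\ref{torus-distinct}: for $(x,y)$ with $p(x)\neq p(y)$ — which is the case off the measure-zero set $\{(x,y):p(x)=p(y)\}$ — take $B'=B(p(x),p(y))$, which has four elements, to get $\bc(x,y)\le|p^{-1}(B')|\le 4d$. For the refinement I would count fiber cardinalities directly: over a value that is not a branch value the fiber has $d$ points, while over a branch value the fiber is smaller, the shortfall being indexed by the ramification points above it and controlled by the recalled relation between cone angle and ramification index; summing over the (at most four) points of $B(p(x),p(y))$ gives the stated decrease. Part (c) is the same argument with closed trajectories: if $p(x)=p(y)$, any nontrivial straight-line trajectory $\gamma$ from $x$ to $y$ on $M$ projects to a straight segment from $p(x)$ to $p(x)$ on $T$ that is again nontrivial, because its holonomy vector equals that of $\gamma$ and a straight segment of zero holonomy is constant; by Lemma~\ref{torus-same} the three-element set $B(p(x),p(x))$ blocks every such segment, so $p^{-1}(B(p(x),p(x)))$ blocks $(x,y)$, has at most $3d$ elements, omits $x$ and $y$ because $p(x)\notin B(p(x),p(x))$, and carries the same ramification refinement.

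There is no dynamical input here and most of the work is elementary; the points that need care are bookkeeping ones. One must ensure that the point of $B'$ met by $p\circ\gamma$ is met in the \emph{interior} of $\gamma$ rather than at an endpoint — this is exactly why I insisted that $b'\neq p(x),p(y)$ — and, in (c), one must exclude the degenerate possibility that the projected loop is trivial, which is handled by the holonomy observation. A further subtlety worth flagging is that a straight-line trajectory on $M$ avoids the singularities of $M$ in its interior yet its image may pass through a branch value of $p$ on $T$; this is harmless precisely because $T$ has no singularities, so $p\circ\gamma$ is a genuine straight-line trajectory on $T$ and $B'$ is entitled to block it.
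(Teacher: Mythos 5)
Your approach is the same as the paper's: project trajectories by the local isometry $p$, invoke the explicit torus blocking sets, and pull back. The paper's proof is in fact only two sentences long -- it declares (a) and (b) easy and isolates exactly the subtlety you flag in (c), namely that the projected closed geodesic may be non-primitive; the paper handles this by truncating $\gamma$ at its first return over $p(x)$, while you handle it by noting that $B(p(x),p(x))$ blocks non-primitive loops as well (true, since the initial primitive piece already meets a half-period point). Either way works, and your holonomy remark ruling out a constant projection is a correct extra check.

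One quantitative step in your write-up does not quite deliver the stated bound. For the refinement in (b) (and its analogue in (c)), counting fiber cardinalities alone gives $\#p^{-1}(b')=d-\sum_i(k_i-1)$ over a branch value with ramification indices $k_i$, so the decrease you obtain is $\sum_i(k_i-1)$, whereas the lemma asserts a decrease of $\sum_i k_i$ (the paper defines the ramification index as $k$ when the cone angle is $2\pi k$). To recover the extra $1$ per ramification point you must also delete the ramification points themselves from $p^{-1}(B')$: this is legitimate because a straight-line trajectory on $M$ has no singularities in its interior, so the blocking point $\gamma(t)\in p^{-1}(b')$ produced by your projection argument is automatically a nonsingular point, and the singular elements of $p^{-1}(B')$ never do any blocking. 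With that one observation added, your argument gives exactly the stated bounds.
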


\begin{proof}
Both (a) and (b) are easy, and (c) follows from the following observation.
When $p(x) = p(y)$, any geodesic path $\gamma$ from $x$ to $y$ 
projects to a geodesic $\gamma'$ from $p(x)$ to itself, possibly non 
primitive. Considering the restriction of the geodesic $\gamma$,
if $\gamma'$ is not primitive, to its initial part until it first
reaches a point projecting to $p(x)$, we see that (c) holds.
\end{proof}




\subsection{Example 1} 
The following example shows that quite general maps $\tau_1, \tau_2$
may arise in Theorem \ref{thm: more general}. 
\begin{prop}
\name{torus-a-b}
Let $a$, $b$ be positive integers with $\gcd(a,b) = 1$,
let $n = a + b$, and let 
$$X = \{(- a x, b x) : x \in T\}.$$ Also let $p: M \to T$ be a
translation cover with branching locus $m_n^{-1}(0)$, and
nontrivial ramification at each pre-image of each branch point, and
let 
$$
Y = (p \times p)^{-1}(X).$$ 
Then any pair of points in $Y$ do not illuminate each other. 
\end{prop}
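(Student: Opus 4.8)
The plan is to reduce the claim about $Y$ to the blocking computation on the torus $T$ using the covering map $p \times p \colon M \times M \to T \times T$. First I would observe that $X = \{(-ax, bx) : x \in T\}$ is a closed subtorus of $T \times T$, and I need to show that for any pair $(u,v) \in X$ the blocking cardinality $\bc(u,v) = 0$ on $T$, i.e.\ there is no straight-line geodesic from $u$ to $v$. Lifting to $\R^2$: write $u = \pi(-a\tilde x)$, $v = \pi(b \tilde x)$ for some $\tilde x \in \R^2$; a geodesic from $u$ to $v$ on $T$ lifts to a segment in $\R^2$ from $-a\tilde x$ to $b\tilde x + (m, n)$ for some $(m,n) \in \Z^2$. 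Its midpoint would be $\frac{1}{2}\big((b-a)\tilde x + (m,n)\big)$; more usefully, the point located $a/n$ of the way along this segment is $(-a\tilde x) + \frac{a}{n}\big((a+b)\tilde x + (m,n)\big) = \frac{a}{n}(m,n)$, a point in $\frac{1}{n}\Z^2$. So every geodesic from $u$ to $v$ passes through a point of $m_n^{-1}(0) \subset T$. Since $\gcd(a,n) = 1$, as $(m,n)$ ranges over $\Z^2$ the point $\frac{a}{n}(m,n) \bmod \Z^2$ ranges over all of $m_n^{-1}(0)$, but that does not matter: the key point is merely that \emph{some} point of the branch locus $m_n^{-1}(0)$ lies on every such geodesic.

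Now I would pull this back to $M$. Let $(x,y) \in Y$, so $(p(x), p(y)) = (u,v) \in X$. Any geodesic $\gamma$ from $x$ to $y$ on $M$ projects under $p$ to a geodesic $\gamma'$ from $u$ to $v$ on $T$. By the previous paragraph, $\gamma'$ passes through a point $z \in m_n^{-1}(0)$, which is a branch point of $p$. Hence $\gamma$ passes through a point of $p^{-1}(z)$, which is a singularity (ramification point) of $M$ since the ramification over each branch point is nontrivial. But a geodesic trajectory on a translation surface connecting $x$ and $y$ is by definition not allowed to contain singular points in its interior, and the point of $p^{-1}(z)$ it meets is in the interior of $\gamma$ unless it coincides with $x$ or $y$. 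So I need to rule out the degenerate possibility that the only branch point met is at an endpoint: but $x, y$ themselves need not be branch points (they are arbitrary points of $Y$), and even when the branch locus point on $\gamma'$ happens to be $u$ or $v$ — which can only occur if $u$ or $v$ itself lies in $m_n^{-1}(0)$ — one handles this exactly as in the proof of Lemma~\ref{degree-d-torus-covering}(c), by truncating $\gamma$ at the first interior point projecting into $m_n^{-1}(0)$; since $\gamma$ is a nontrivial segment from $x$ to $y$ with $p(x) \ne p(y)$ (as $u \ne v$ when $x \ne (-a\tilde x)$-type $\ldots$) one still finds an interior singular point. Actually the cleanest formulation: the fraction-$a/n$ point along $\gamma'$ is an \emph{interior} point of $\gamma'$ since $0 < a/n < 1$, hence the corresponding point of $\gamma$ is interior, and it projects into $m_n^{-1}(0)$, so it is a singularity of $M$ in the interior of $\gamma$, contradiction.

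I would therefore structure the write-up as: (1) identify $X$ as a subtorus and do the $\R^2$ lift to locate the fraction-$a/n$ point of an arbitrary geodesic on $T$ between points of $X$; (2) note this point always lies in $m_n^{-1}(0)$ and is an interior point of the segment; (3) pull back along $p$, using that every preimage of a branch point is a singularity of $M$, to conclude no geodesic on $M$ between points of $Y$ avoids the singularities, i.e.\ the pair does not illuminate each other. The main obstacle, such as it is, is just the careful bookkeeping in step (2)–(3) to ensure the obstructing point genuinely lies in the \emph{open} segment and is genuinely singular on $M$ — this is where the hypotheses $\gcd(a,b)=1$ (so that $n = a+b$ and the fraction $a/n$ is in lowest terms, guaranteeing the point lands in the full branch locus and the combinatorics match Lemma~\ref{torus-distinct-more}) and ``nontrivial ramification at each preimage of each branch point'' are used. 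Everything else is a direct translation of the torus blocking lemmas above through the covering map.
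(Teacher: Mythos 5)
Your argument is correct and is essentially the paper's proof unpacked: the paper simply cites Lemma \ref{torus-distinct-more} to see that $m_n^{-1}(0)$ (the branch locus) is a common blocking set for all pairs in $X$, via the same observation that the point a fraction $a/n$ of the way along any lift lands in $\frac{1}{n}\Z^2$, and then pulls back through $p$ via Lemma \ref{degree-d-torus-covering}, exactly as you do by hand. Only cosmetic quibble: you reuse the letter $n$ for the integer translation vector $(m,n)$, clashing with $n=a+b$, but the mathematics is the same and your ``cleanest formulation'' (the $a/n$ point is interior since $0<a/n<1$) correctly disposes of the endpoint degeneracy.
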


\begin{proof}
For $x \in \R^2$, the point $0$ is $a/n$ along the geodesic in $\R^2$ from 
$-ax$ to $bx$. Thus, according to Lemma \ref{torus-distinct-more}, the
set $B = m_n^{-1}(0)$ is a common blocking set, of  
cardinality $n^2$, for all pairs
of points in $X$. Thus the statement follows from Lemma \ref{degree-d-torus-covering}. 
\end{proof}

\subsection{Example 2} 
The following examples show that the map $\tau_2 \circ \tau_1^{-1}$
could be a translation. Let $M=T$ be the torus, and consider 
$$
N = \{(x,y) \in M^2: \bc(x,y) \leq 3\}.
$$
Then according to Lemma
\ref{torus-same}, $N$ contains the diagonal $\{(x,x): x \in M\}$ but
according to Lemma \ref{torus-distinct}, $N \neq M^2$. Therefore the
diagonal is one of the linear submanifolds appearing in Theorem
\ref{thm: more general}, and we can have $\tau_2 \circ \tau_1^{-1} =
\mathrm{Id}$. 

\begin{figure}
\begin{tikzpicture}
\useasboundingbox (0,0) rectangle (6,6.5);
\draw (0,0)
  -- node [above=-2pt] {\includegraphics[scale=0.1]{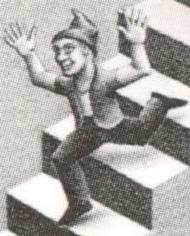}} ++(2,0)
  -- ++(2,0) -- ++(0,2)
  -- ++(2,0) -- ++(0,2)
  -- node [above=-2pt] {\includegraphics[scale=0.3]{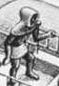}} ++(2,0) -- ++(0,2)
  -- node [above=-2pt] {\includegraphics[scale=0.1]{1.jpg}} ++(-2,0)
  -- ++(-2,0) -- ++(0,-2)
  -- ++(-2,0) -- ++(0,-2)
  -- node [above=-2pt] {\includegraphics[scale=0.3]{2.jpg}} ++(-2,0) -- ++(0,-2)
  -- cycle;
 \draw [very thick] (0.88,0.05) -- (0.92,0.13) -- (0.925,0.2);
 \draw [very thick] (0.925,0.2) -- (0.975,0.14) -- 
 (0.94,0.08);
 \draw [very thick] (0.925,0.19) -- (0.99,0.35);
 \draw [very thick] (0.985,0.33) -- (0.91,0.26) -- (1,0.23);
 \draw [fill=black] (1,0.36) circle (0.9pt);
 \begin{scope}[xshift=6cm,yshift=8cm]
 \draw (0.9,0.1) -- (0.925,0.2) -- (0.96,0.15) -- (0.94,0.12);
 \draw (0.925,0.2) -- (0.95,0.28);
 \draw (0.95,0.28) -- (0.925,0.24) -- (0.96,0.23);
 \draw [fill=black] (0.96,0.29) circle (0.4pt);
\end{scope}
\foreach \x in {0,2,4,6}
  {\draw [very thin,fill=white] (\x,\x) circle (2.2pt);
 \draw [fill=black!70] (\x+2,\x) circle (2pt);}
  ;
\foreach \x in {0,2,4}
  {\draw [very thin,fill=white] (\x+4,\x) circle (2.2pt);
  \draw [fill=black!70] (\x,\x+2) circle (2pt);}
  ;
\end{tikzpicture}
\caption{The Escher double staircase. Sides marked with identical
 stair-climbers are identified; unmarked sides are identified with
  the corresponding opposite sides. } 
\label{fig: Escher} 
\end{figure}

Similar examples in which $\tau_2 \circ \tau_1^{-1}$ is a nontrivial translation
can be obtained by taking $M$ to be a cyclic cover of $T$, for example
the Escher staircase (see Figure \ref{fig: Escher}). 
This surface admits a degree 3 cover $p:M \to T$ and it has a nontrivial
translation automorphism $D: M \to M$ moving one step 
up the ladder. Let $x$ and $y$ be any two points such that
$D(x)=y$. Then $p(x)=p(y),$ and 
according to Lemma \ref{degree-d-torus-covering}(c), $\bc(x,y) \leq
9$. It is not hard to find  an explicit pair of points $x,y$ for which
$\bc(x,y)>9$. This shows that if we take this surface $M$ and $n=9$,
then we can have a subsurface $N$ for which $D=\tau_2 \circ \tau_1^{-1}$
is a translation automorphism.

\subsection{Example 3.}\name{related example2}
Using the torus and Lemmas \ref{torus-distinct} and \ref{torus-same}
we easily find sequences $x_k \to x, y_k \to y$ for 
which $\bc(x,y) < \lim_k \bc(x_k, y_k)$, i.e. the blocking cardinality
is not continuous. 
The following example shows that blocking cardinality it is not even 
lower semi-continuous, i.e. it may increase  when taking limits. It
also shows that in Lemma \ref{lem: closed sets}(I), we cannot replace
$\widehat{M^2}$ with $M^2$. 
 
Let $M$ be a surface in $\cH(2)$. Then $M$ admits a hyper-elliptic
involution $h$, whose set of fixed points consists of the unique
singularity $\xi$, and $5$ non-singular Weierstrass points. We claim
that whenever $h(x) = y, x \neq y$, we have $\bc(x,y) \leq
5$. Indeed, in this case, the action of $h$ swaps $x$ and $h(x)$, and
acts by rotation by $\pi$. So $h$ maps any segment between $\sigma$
between $x$ and $y$ to another segment from $x$ to $y$, of the same
length and in the same 
direction. Since $x$ and $y$  are distinct regular points there is only one such
segment, i.e. $h$ maps $\sigma$ to itself, reversing the orientation
on it. So its midpoint must be
fixed by $h$, that is, the Weierstrass points form a blocking set for
the pair $(x,y)$. 

On the other hand by constructing explicit disjoint segments, it is
not hard to show that $\bc(\xi, \xi) \geq 9$. For example we can
present $M$ as the union of four parallelograms (an $L$-shaped
presentation), and the diagonal and edges of these parallelograms
contain 9 disjoint segments from $\xi$ to $\xi$. Now taking $x_k \to
\xi,$ we have $y_k = h(x_k) \to \xi$, and 
$$
5 \geq \lim_k \bc(x_k, y_k), \ \ \ \bc(\lim_k x_k, \lim_k y_k) = \bc (\xi,
\xi) \geq 9.
$$

\subsection{Questions}
1. Let $N \subset M \times M$ be a 2-dimensional linear submanifold as in Theorem \ref{thm:
  more general}(ii), and let $\lambda_1, \lambda_2$ be the derivatives of
the translation maps $\tau_1, \tau_2$. The quotient $\lambda = \lambda_1/\lambda_2$ is called
the {\em slope} of $N$.  
In Example 1 the slope is 1, in Example 3 the slope is $-1$, and in Example 2 the slope can be an
arbitrary negative rational number. It would be interesting to know
whether other slopes are possible. In particular, do the cases 
$\lambda=0, \, \lambda=\infty$ actually arise in connection with blocking
configurations? Do positive rational slopes arise, except for $\lambda=1$? 

2. More generally, suppose $N \subset
M \times M$ is an
embedded translation surface for which the maps $\tau_i:N \to M$  are
the composition of a dilation and a translation, and let $\lambda$ be
the derivative of the composition $\tau_2 \circ \tau_1^{-1}$. In the
proof of Theorem \ref{thm: more general} we showed that $\lambda^2 \in \Q$. Is it
possible that $\lambda$ is irrational?

3. In the last assertion of Lemma
\ref{lem: closed sets}, can we take $\ell = n$? Example 4 shows that
this does not follow from a simple continuity argument.

\ignore{
\section{Bounded complete periodicity}
Let $M$ be a translation surface, and let $h, c, T$ be positive
numbers. A {\em cylinder} on $M$ of {\em 
  height $h$} and {\em circumference $c$}  is a
subsurface isometric to $[0, h] \times \R/c\Z$.  For any $x \in (0,
h)$, the image of $\{x\} \times \R/c\Z$ under the above 
isometry is called a {\em core curve} of the cylinder. The {\em
  direction} of the cylinder is the direction of the core curve of the
cylinder (the image of the horizontal direction under the above
isometry). We say that a
direction is {\em completely periodic on $M$} if for any direction $\theta$ of
 a cylinder, the surface decomposes completely as a
union of cylinders with with direction $\theta$. Equivalently, any
prong issuing from a singularity on $M$ in direction $\theta$ is a
saddle connection. We say that $M$
satisfies {\em bounded complete periodicity (with parameter $T$)} if
in addition,  for any two cylinders on $M$ in the same
 direction, the ratio of their 
circumferences is bounded
from above by $T$.

\begin{lem}\name{lem: bcp closed}
For any stratum $\cH$ and any $T>0$, the set 
of surfaces in $\HH$
which satisfy bounded complete periodicity with 
parameter $T$, is closed in $\cH$. 
\end{lem} 
\begin{proof}
We first recall some properties of the topology on $\cH$. 
A triangulation of a translation surface $M$ is a presentation of $M$
as a union of triangles with vertices at singular points and edge
gluings. It is known that any translation surface admits such a
triangulation. The topology of a stratum $\HH$ is such that for any sequence $M_n \to
M$, for any triangulation of $M$, and all sufficiently large $n$,
there is a triangulation of $M_n$ and a bijective correspondence
between the triangles in the triangulations of $M_n$ and $M$
respectively, respecting the edge gluings, such that
each triangle on $M_n$ converges (after a translation, in the natural topology on the space
of triangles) to the corresponding triangle on $M$. 

This implies the following. Suppose $x$ is a 
singularity on $M$ and $\theta$ is a direction, such that a
prong $\sigma$ on $M$ starting at $x$ in direction $\theta$ and has
length $w$ and does not end at a singularity. Then there is $h>0$ so
that the rectangle $R(w,h)$ with sides in direction $\theta, \theta
+\pi/2$ of length $w, h$ respectively, such that $\sigma$ runs
through the center of $R$, is embedded in $M$. Also, for all
$w'<w, h'<h$ and all sufficiently large $n$ (depending on $w', h'$) the rectangle $R(w', h')$
can be embedded in $M_n$ with the singularity corresponding to
$x$ on the middle of the appropriate side of $R(w', h')$. 

Now let $\mathrm{BCP}_T$ denote the set of surfaces satisfying bounded
complete periodicity with parameter $T$ and let $M_n \in
\mathrm{BCP}_T$ such that $M_n \to M$. We need to show $M \in \BCP_T$
and  first show that $M$ is 
completely periodic. To this end let $C$ be a
cylinder in $M$ of direction $\theta$ and denote the circumference
of $C$ by $c$. For all $n$ sufficiently large
we can find a cylinder $C_n$ on $M_n$ of direction $\theta_n$ such
that $\theta_n \to \theta$ and the circumference and height of $C_n$ tend to those
of $C$. Let $c'>cT$. By assumption, for all large enough $n$, the set
$\Xi_n$ of saddle connections on $M_n$ in 
direction $\theta_n$ consists of
saddle connections all shorter than $c'$. Suppose by contradiction that $M$ contains a
prong $\sigma$  in direction $\theta$ of length $c'$.  Let $R(c',h)$
be a rectangle as in the previous paragraph. Then for all large enough $n$, $M_n$ contains
rectangles which are free of saddle connections, of uniformly bounded
height and with a side in direction $\theta$ of length more
than $cT$. Since
$\theta_n \to \theta$ we see that for large enough $n$, $M_n$ contains a prong in direction
$\theta_n$ of length more than $cT$, which is a contradiction. 

We have seen that $M$ is completely periodic. Since each cylinder on
$M$ is a limit of cylinders on the surfaces $M_n$, and since the
circumferences of cylinders on $M_n$ in direction $\theta_n$ are
bounded above, each cylinder on $M$ in direction $\theta$ is the limit
of cylinders on $M_n$ in direction $\theta_n$. Hence the ratios of the 
circumferences of cylinders on $M$ are also bounded by $T$. 
\end{proof}  
\ignore{
\begin{thm}\name{thm: bcp}
$M$ satisfies bounded complete periodicity if and only if $M$ is of
cylinder rank one. 
\end{thm}
}
\begin{remark}\name{remark: bcp lanneau}
By a result of Erwan Lanneau (unpublished), in $\cH^{\mathrm{hyp}} (4)$ (the
hyper-elliptic connected component of the stratum $\cH(4)$) there are
completely periodic surfaces whose $G$-orbit is dense in $\cH^{\mathrm{hyp}}(4)$,
that is the statement of Theorem \ref{thm: bcp} would be false if one replaced
`bounded complete periodicity' with `complete periodicity'. 
\end{remark}

\begin{proof}[Proof of Theorem \ref{thm: bcp}]
Suppose first that $M \in \BCP_T$ and let $\cL \df
\overline{GM}$. According to Lemma \ref{lem: bcp closed}, all surfaces
in $\cL$ are completely periodic and hence, by Theorem \ref{prop:
  Wright equivalences}, $\cL$ is of cylinder rank one.  

For the converse, suppose $\cL = \overline{GM}$ is of cylinder rank
one. By Theorem \ref{prop: Wright equivalences}, $M$
is completely periodic, so suppose by contradiction that there is a
sequence of completely periodic directions $\theta_n$ on $M$ such that
the largest ratio of circumferences of cylinders in direction
$\theta_n$, is greater than $n$. In order to reach a
contradiction, by \cite[Theorem 1.10]{Wright} it suffices to
find a surface in $\cL$ which has two parallel horizontal 
cylinders which are not $\cL$-parallel; or equivalently, a
one-parameter continuous family of surfaces in $\{M(s): s \in \R\}\subset  \cL$
containing one cylinder of circumference 1 and one cylinder of
circumference $s$; for then the surface $M(0)$ will have at least two
$\cL$-parallel equivalence classes of cylinders. In order to achieve
this goal we emulate the strategy of \cite{MW, calanque} for finding
completely periodic surfaces in $\cL$. 

Namely, first let $g_n \in G$ such that the cylinder on $M$ in
direction $\theta_n$ with smallest circumference is mapped under $g_n$
to a horizontal cylinder of circumference 1. By construction, $g_nM$
contains a horizontal cylinder with circumference at least
$n$. Suppose first that the length of the shortest horizontal saddle
connection on $g_nM$ does not tend to zero. Let 
$$
h_s \df \left(\begin{matrix} 1 & s \\ 0 & 1 \end{matrix} \right)
$$
denote the unipotent subgroup preserving the horizontal direction,
i.e.\ the horocycle flow. Then according to
\cite[Theorem 6.3]{MW} (see proof of Theorem H2), after replacing $g_n$ with
$h_{s_n}g_n$ for some $s_n$, we can assume that the sequence $g_nM$
converges in the stratum $\cH$ to a surface $M' \in \cL$. 
\end{proof}
}
\bibliographystyle{alpha}
\bibliography{translation}

\def\cprime{$'$} \def\cprime{$'$}
\begin{thebibliography}{HST08}

\bibitem[EM]{EsMi}
Alex Eskin and Maryam Mirzakhani.
\newblock Invariant and stationary measures for the
  $\mathrm{SL}(2,\mathbb{R})$-action on moduli space.
\newblock Preprint.

\bibitem[EMM]{EsMiMo}
Alex Eskin, Maryam Mirzakhani, and Amir Mohammadi.
\newblock Isolation theorems for $\mathrm{SL}(2,\mathbb{R})$-invariant
  submanifolds in moduli space.
\newblock Preprint.

\bibitem[HST08]{HST}
P.~Hubert, M.~Schmoll, and S.~Troubetzkoy.
\newblock {Modular fibers and illumination problems.}
\newblock {\em Int. Math. Res. Notices}, 2008.

\bibitem[LW]{noconvex}
Samuel Leli\`evre and Barak Weiss.
\newblock Translation surfaces with no convex presentation.
\newblock Preprint.

\bibitem[Mon05]{ThierryBP}
Thierry Monteil.
\newblock {On the finite blocking property.}
\newblock {\em Ann. Inst. Fourier}, 55(4):1195--1217, 2005.

\bibitem[Mon09]{ThierryPP}
Thierry Monteil.
\newblock {Finite blocking property versus pure periodicity.}
\newblock {\em Ergodic Theory Dyn. Syst.}, 29(3):983--996, 2009.

\bibitem[MT02]{MT}
Howard Masur and Serge Tabachnikov.
\newblock Rational billiards and flat structures.
\newblock In {\em Handbook of dynamical systems, Vol. 1A}, pages 1015--1089.
  North-Holland, Amsterdam, 2002.

\bibitem[SW07]{toronto}
John Smillie and Barak Weiss.
\newblock {Finiteness results for flat surfaces: a survey and problem list.}
\newblock {Forni, Giovanni (ed.) et al., Partially hyperbolic dynamics,
  laminations, and Teichm\"uller flow. Selected papers of the workshop,
  Toronto, Ontario, Canada, January 2006. Providence, RI: American Mathematical
  Society (AMS); Toronto: The Fields Institute for Research in Mathematical
  Sciences. Fields Institute Communications 51, 125-137 (2007).}, 2007.

\bibitem[SW10]{nst}
John Smillie and Barak Weiss.
\newblock {Characterizations of lattice surfaces.}
\newblock {\em Invent. Math.}, 180(3):535--557, 2010.

\bibitem[Tok95]{Tokarsky}
George~W. Tokarsky.
\newblock {Polygonal rooms not illuminable from every point.}
\newblock {\em Am. Math. Mon.}, 102(10):867--879, 1995.

\bibitem[Vee89]{Veech_alternative}
W.A. Veech.
\newblock {Teichm\"uller curves in moduli space, Eisenstein series and an
  application to triangular billiards.}
\newblock {\em Invent. Math.}, 97(3):553--583, 1989.

\bibitem[Vee95]{Veech_hayashibara}
William~A. Veech.
\newblock {Geometric realizations of hyperelliptic curves.}
\newblock {Takahashi, Yoichiro (ed.), Algorithms, fractals, and dynamics.
  Proceedings of the Hayashibara Forum '92: International symposium on new
  bases for engineering science, algorithms, dynamics, and fractals, Okayama,
  Japan, November 23-28, 1992 and a symposium on algorithms, fractals, and
  dynamics, November 30--December 2, 1992, Kyoto, Japan. New York, NY: Plenum
  Press. 217-226 (1995).}, 1995.

\bibitem[Vor96]{Vorobets}
Ya.B. Vorobets.
\newblock {Planar structures and billiards in rational polygons: the Veech
  alternative.}
\newblock {\em Russ. Math. Surv.}, 51(5):779--817, 1996.

\bibitem[Wria]{Wright}
Alex Wright.
\newblock Cylinder deformations in orbit closures of translation surfaces.
\newblock Preprint.

\bibitem[Wrib]{Wright_field}
Alex Wright.
\newblock The field of definition of affine invariant submanifolds of the
  moduli space of abelian differentials.
\newblock Preprint.

\bibitem[Zor06]{Zorich}
Anton Zorich.
\newblock Flat surfaces.
\newblock In {\em Frontiers in number theory, physics, and geometry. Vol. I},
  pages 437--583. Springer, Berlin, 2006.

\end{thebibliography}

\end{document}